\newtheorem{theorem}{Theorem}[section]
\newtheorem{lemma}{Lemma}[section]
\newtheorem{proposition}[theorem]{Proposition}
\newtheorem{corollary}[theorem]{Corollary}
\newtheorem{definition}{Definition}[section]
\def\bkE{{\rm I\kern-.17em E}}
\def\bk1{{\rm 1\kern-.17em l}}
\def\bkD{{\rm I\kern-.17em D}}
\def\bkR{{\rm I\kern-.17em R}}
\def\bkP{{\rm I\kern-.17em P}}
\def\bkZ{{\bf{Z}}}
\def\bkE{{\rm I\kern-.17em E}}
\def\bk1{{\rm 1\kern-.17em l}}
\def\bkD{{\rm I\kern-.17em D}}
\def\bkR{{\rm I\kern-.17em R}}
\def\bkP{{\rm I\kern-.17em P}}
\def\bkZ{{\bf{Z}}}
\def\b12{(\beta_1,\beta_2)}
\newenvironment{proof}[1][]{{\noindent \bf Proof #1: }}{\hfill \qed \vspace{6pt}\\ }
\newcounter{example}
\renewcommand{\theexample}{\thesection.\arabic{example}}
\newcounter{remark}
\renewcommand{\theremark}{\thesection.\arabic{remark}}
\newlength{\noteWidth}
\long\def\notes#1{\ifinner
{\tiny #1}
\else
\marginpar{\parbox[t]{\noteWidth}{\raggedright\tiny #1}}
\fi\typeout{#1}}
 \def\notes#1{\typeout{read notes: #1}} 
\newcommand{\minimize}[1]{\displaystyle\minim_{#1}}
\newcommand{\minim}{\mathop{\hbox{\rm min}}}
\def\subject{\hbox{\rm s.t.}}
\def\spose#1{\hbox to 0pt{#1\hss}}
\def\text #1{\hbox{\quad#1\quad}}
\def\nthinsp{\mskip -2   mu}
\def\superstar{^{\raise 0.5pt\hbox{$\nthinsp *$}}}
\def\SUPERSTAR{^{\raise 0.5pt\hbox{$*$}}}
\def\lamstarT {\lambda^{\raise 0.5pt\hbox{$\nthinsp *$}T}}
\let\forallnew\forall
\renewcommand{\forall}{\forallnew\ }
\let\forall\forallnew
		\def\bkE{{\rm I\kern-.17em E}}
		\def\bk1{{\rm 1\kern-.17em l}}
		\def\bkD{{\rm I\kern-.17em D}}
		\def\bkR{{\rm I\kern-.17em R}}
		\def\bkP{{\rm I\kern-.17em P}}
		\def\bkY{{\bf \kern-.17em Y}}
		\def\bkZ{{\bf \kern-.17em Z}}
		\def\bkC{{\bf  \kern-.17em C}}
		\def\bsp{\begin{split}}
		\def\beq{\begin{eqnarray}}
		\def\bal{\begin{align*}}
		\def\bc{\begin{center}}
		\def\be{\begin{enumerate}}
		\def\bi{\begin{itemize}}
		\def\bs{\begin{small}}
		\def\bS{\begin{slide}}
		\def\ec{\end{center}}
		\def\ee{\end{enumerate}}
		\def\ei{\end{itemize}}
		\def\es{\end{small}}
		\def\eS{\end{slide}}
		\def\eeq{\end{eqnarray}}
		\def\eal{\end{align*}}
		\def\esp{\end{split}}
		\def\qed{ \vrule height7.5pt width7.5pt depth0pt}  
		\def\problemsmall#1#2#3#4{\fbox
		 {\begin{tabular*}{0.47\textwidth}
			{@{}l@{\extracolsep{\fill}}l@{\extracolsep{6pt}}l@{\extracolsep{\fill}}c@{}}
				#1 & $\minimize{#2}$ & $#3$ & $ $ \\[5pt]
					  $\subject\ $ &    & $#4$ & $ $
			\end{tabular*}}
			}
	\def\cp2problem#1#2#3#4{\fbox
		 {\begin{tabular*}{0.9\textwidth}
			{@{}l@{\extracolsep{\fill}}l@{\extracolsep{6pt}}l@{\extracolsep{\fill}}c@{}}
				#1 & & $#4 $ 
			\end{tabular*}}}
		\def\bkE{{\rm I\kern-.17em E}}
		\def\bk1{{\rm 1\kern-.17em l}}
		\def\bkD{{\rm I\kern-.17em D}}
		\def\bkR{{\rm I\kern-.17em R}}
		\def\bkP{{\rm I\kern-.17em P}}
		\def\bkZ{{\bf{Z}}}
\newcommand {\beeq}[1]{\begin{equation}\label{#1}}
\newcommand {\eeeq}{\end{equation}}
\newcommand {\bea}{\begin{eqnarray}}
\newcommand {\eea}{\end{eqnarray}}
\def\texitem#1{\par\smallskip\noindent\hangindent 25pt
               \hbox to 25pt {\hss #1 ~}\ignorespaces}
\begin{document}

%
\title{New Results on the Existence of Open Loop Nash Equilibria in Discrete Time Dynamic Games }
%
%
%

\author{Mathew~P.~Abraham and 
        Ankur~A.~Kulkarni
\thanks{Mathew  and Ankur are with the Systems and Control Engineering group, Indian Institute of Technology Bombay, Mumbai, India, 400076. email: \texttt{mathewp@iitb.ac.in}, \texttt{kulkarni.ankur@iitb.ac.in}}
}

\maketitle

\begin{abstract}We address the problem of finding conditions which guarantee the existence of open-loop Nash equilibria in discrete time dynamic games (DTDGs). The classical approach to DTDGs involves analyzing the problem using optimal control theory which yields results mainly limited to linear-quadratic games\cite{basar99dynamic}. We show the existence of equilibria for a class of DTDGs where the cost function of players admits a \textit{quasi-potential} function which leads to new results and, in some cases, a generalization of similar results from linear-quadratic games. Our results are obtained by introducing a new formulation for analysing DTDGs using the concept of a \textit{conjectured state} by the players. In this formulation, the state of the game is modelled as \textit{dependent} on players. Using this formulation we show that there is an \textit{optimisation problem} such that the solution of this problem gives an equilibrium of the DTDG.
    To extend the result for more general games, we modify the DTDG with an additional constraint of \textit{consistency} of the conjectured state. Any  equilibrium of the original game is also an equilibrium of this modified game with consistent conjectures.
    In the modified game, we show the existence of equilibria for DTDGs where the cost function of players admits a potential function. We end with conditions under which an equilibrium of the game with consistent conjectures is an $\epsilon$-Nash equilibria of the original game.       
    
\end{abstract} 

\begin{IEEEkeywords}
Discrete time dynamic games, Open-loop Nash equilibrium, Potential games, Quasi-potential games, Shared constraint games, Consistent state conjectures.
\end{IEEEkeywords}

%
\IEEEpeerreviewmaketitle

\section{Introduction}
Many scenarios which involve more than one player with decisions to be made over finitely many stages can be modelled as Discrete Time Dynamic Games (DTDGs). In a DTDG, at each stage of the game, each player makes decisions which optimise his \textit{cost function}. A set of variables called the \textit{state} of the game evolves according to the \textit{state equation} depending on the decision of players at each stage. Many situations in power markets, robotics, network security, environmental economics, natural resource economics, industrial organisation and so on are known to come under the framework of DTDGs \cite{kannan2011game}, \cite{van2010survey}, \cite{bylka2000discrete}. 

The \textit{information structure} of the dynamic game declares what each player knows while making the decision. A commonly used information structure is the \textit{open-loop information structure} in which the players only have the information of the initial state of the game while making the decisions. This paper derives new results on the existence of \textit{Nash equilibria} under the assumption of an open-loop information structure. The resulting equilibria are termed as \textit{open-loop Nash equilibria}.

An open-loop Nash equilibrium  is a profile of strategies for which the players do not have any incentive for unilateral deviation. Guaranteeing the existence of an equilibrium has been a primary challenge in the theory of DTDGs. In general, the approach of analysing DTDGs consists of viewing each players' problem as an optimal control problem and using the results from optimal control theory \cite{basar99dynamic}. But the results that provide sufficient conditions for the existence of an equilibrium are mostly confined to linear-quadratic DTDGs \cite{engwerda2005lq}, \cite{jank2003existence}, \cite{reddy2015OLNE_DTDG}. In this paper, we introduce a new approach of analysing DTDGs which leads to new results on the existence of open-loop Nash equilibria. Furthermore, in some cases, our result generalize earlier results from linear-quadratic games.

A key step in our approach is the introduction of the concept of a \textit{conjectured state} by players for DTDGs. This allows us to model the state of the game as dependent on players. This leads to a new formulation of DTDGs with the conjectured state also considered as a decision variable. In the new formulation, we define a class of games called \textit{quasi-potential} DTDGs. In quasi-potential DTDGs, the cost function of players has a special structure. One part of the cost function admits a potential function and another part is identical for all players. Our first result gives conditions for the existence of open-loop Nash equilibria for quasi-potential DTDGs. The result is obtained by relating an equilibrium of a quasi-potential DTDG to the solution of an optimisation problem.

Following this, we modify the DTDG by introducing an additional constraint of consistency of the conjectured states\footnote{The notion of consistent conjecture used here is different from the one used by Marie and Tidball in \cite{jean2005consistent}}, which can be deciphered as follows. A player in this game is constrained to conjecture the state of the game consistently with the conjectures of other players. We will refer to this game as a \textit{DTDG with consistent conjectures}. The equilibrium that we obtain from the DTDG with consistent conjectures is a weaker notion of the equilibrium of the original game. That is, an equilibrium of the original game is also an equilibrium of the game with consistent conjectures, however, the reverse may not be true. The DTDG with consistent conjectures has a \textit{shared constraint} structure unlike the original game. Utilising the shared constraint structure, we show  the existence of equilibria for DTDGs with consistent conjectures under the assumption that the cost function of the players admits a potential function \cite{monderer96potential}. We end with a result that provides sufficient conditions for an equilibrium of the game with consistent conjectures to be an $\epsilon$-Nash equilibrium of the original game.

In addition to deriving new results on the existence of open-loop Nash equilibria, this approach has other advantages also. For example, the formulation with the conjectured state allows one to consider constraints on actions and states in a game. Also, the weaker notion of Nash equilibrium for the game with consistent conjectures can be used as a design tool since there can be some additional equilibrium points in the modified game compared to the original one.  

The rest of the paper is organised as follows. The next section provides necessary preliminaries and background for the paper. Section \ref{sec_equivalent_formulation} details the new formulation of DTDGs with the conjectured state as decision variable and its equivalence to the classical formulation.  Section \ref{sec_quasi} defines quasi-potential DTDGs and conditions for existence of an equilibrium in such games. The case of DTDGs with stage-additive cost function is also considered for quasi-potential DTDGs in this section. Section \ref{sec_modified_game} describes the DTDG with consistency condition, the shared constraint structure of this game and the existence of an equilibrium in this game with the cost of players having a potential function. This section also provides a condition for an equilibrium of the DTDG with consistent conjectures to be an $\epsilon$-Nash equilibrium of the original problem. The paper ends with a conclusion in Section \ref{conclusion}.

\section{Preliminaries \& Background} \label{preliminaries}
Before discussing the main contents of the paper, in this section, we mention some necessary definitions and results which are related to our work.  
The definition of DTDGs and the open-loop Nash equilibria in DTDGs \cite{basar99dynamic} are presented initially. This will also introduce the notation that we use throughout this paper.
\begin{definition} \label{def_classical_DTDG}
\label{DTDG_def} A DTDG with open-loop information structure consists of the following. 
\begin{enumerate}
\item An index set, $\mathcal{N}=\{1, 2,\dots,N\}$ called the \textit{players set}, where $N$ denote the number of players.

\item An index set, $\mathcal{K}=\{1, 2,\dots,K\}$ called the \textit{stage set} of the game, where $K$ is the maximum number of actions a player can make in a game.

\item A set ${U_{k}^{i}}$,  $\forall i\in \mathcal{N}$ and $\forall k\in \mathcal{K}$ called the \textit{action / control set} to which the action of player $i$ at stage $k$ belongs. The Cartesian set, $U^i \triangleq U^i_1 \times U^i_2 \times \dots \times U^i_K$ is the action set of player $i$ and $U \triangleq U^1 \times \dots \times U^N$ is the action set of the game. The action set of adversaries of player $i$ at stage $k$ and for the game is defined as
$ U^{-i}_{k} \triangleq \prod_{j \in \mathcal{N} \backslash \{i\}} U^{j}_{k}$  and
$U^{-i} \triangleq U^{-i}_1 \times U^{-i}_2 \times \dots \times U^{-i}_{K}$ respectively. 

\item A set $X_{k}$, $\forall k\in \mathcal{K} \cup \{K+1\}$ called the \textit{state space} of the game at stage $k$ to which the state of the game belongs. Since we are considering the open-loop information structure, the initial state of the game is assumed to be known to all players and is denoted by $x_1 \in X_1$.

\item A mapping $f_{k}: X_{k}\times U^{1}_{k} \times U^{2}_{k} \times \dots \times U^{N}_{k} \longrightarrow X_{k+1}$ is defined for each $k\in \mathcal{K}$ such that 
\begin{equation} \label{state_eqn}
x_{k+1}=f_{k}(x_{k},u_{k}^{1},u_{k}^{2},\dots,u_{k}^{N})
\end{equation} is the \textit{state equation} of the DTDG, where $u_{k}^{1}\in U^1_k,u_{k}^{2}\in U^2_k,\dots,u_{k}^{N}\in U^N_k$ are the actions of the players at stage $k$ and $x_k\in X_k$ is the state at stage $k$. 

\item A class of mappings denoted by $\Gamma^i_k$, $ i \in \mathcal{N}$, $ k \in \mathcal{K}$ called the \textit{strategy set} of player $i$ at stage $k$. A mapping $\gamma^i_k \in \Gamma^i_k$, given by $\gamma^i_k: X_1 \rightarrow U^i_k$ is the \textit{strategy} of player $i$ at stage $k$. The aggregate mapping $\gamma^i=\{\gamma^i_1,\gamma^i_2,\dots,\gamma^i_K\} \in \Gamma^i = \prod\limits_{k=1}^{K}\Gamma^i_k$ is the strategy of player $i$ in the game.

\item A function $ J^{i}:U \times X_2 \times \dots\times X_{K+1}  \longrightarrow \mathbb{R}$ is defined for each $i \in \mathcal{N}$ called the \textit{cost function of Player $i$} in the game of $K$ stages.
\end{enumerate}

For each fixed initial state $x_1$ and for fixed $N$-tuple of permissible strategies $\{\gamma^i \in \Gamma^i;i \in \mathcal{N}\}$, there exist a unique set of vectors $\{u^i_k \equiv \gamma^i_k(x_1); i \in \mathcal{N}, k \in \mathcal{K}\}$ and the state evolves according to the state equation \eqref{state_eqn}. Substitution of these quantities into $J^i,i \in \mathcal{N}$ leads to a unique $N$-tuple of numbers reflecting corresponding costs to the players. This implies the existence of $
L^{i}:\Gamma^1 \times \Gamma^2 \times \dots \times\Gamma^{N} \longrightarrow \mathbb{R}
$, the cost function in the strategy space. 
That is
$L^i (\gamma^{1},\dots,\gamma^{N})  \equiv J^i (u^1,\dots,u^N, x_1,x_2,\dots,x_{K+1}),$
where $\gamma^i=\{\gamma^i_1,\gamma^i_2,\dots,\gamma^i_K \} \in \Gamma^i; i \in \mathcal{N}$ and $u^i=(u^i_1, u^i_2,\dots, u^i_K) \in U^i; i \in \mathcal{N}$ and each $u^i_k= \gamma^i_k(x_1)$ and $x_{k+1}=f_{k}(x_{k},u_{k}^{1},u_{k}^{2},\dots,u_{k}^{N}), k \in \mathcal{K}$.

\end{definition}

 In a DTDG with open-loop information structure, a player's problem is to decide his strategies $\gamma^i \in \Gamma^i$ which minimises his cost function $L^{i}$. Often the cost function is expressed as a stage-wise additive function which is defined as follows.
 
 \begin{definition} \label{def_stage_additive}
     In an $N$-person DTDG of $K$-stages, player $i$'s cost function is said to be \textit{stage-additive} if there exist functions $g^i_k: X_{k+1} \times U^1_k \times \dots \times U^N_k \times X_k \rightarrow \mathbb{R}, \forall k \in \mathcal{K}$ such that, 
     \begin{align*}
         J^i(u^1,\dots,u^N,x_{1},\dots,x_{K+1}) =  \sum\limits_{k=1}^{K} g^i_k(x_{k+1},u^1_k,\dots,u^N_k,x_k),
     \end{align*}
 where $u^i=(u^i_1,\dots,u^i_K) \in U^i, \forall i \in \mathcal{N}$ and $x_{k} \in X_{k}, \forall k \in \mathcal{K} \cup \{K+1\}$.
 \end{definition}

We define the open-loop Nash equilibrium in a DTDG as follows \cite{basar99dynamic}.
\begin{definition}
 An $N$-tuple of strategies $\{\gamma^{i*}\in\Gamma^i;i\in \mathcal{N}\}$ constitute an \textit{open-loop Nash equilibrium} if and only if the following inequalities are satisfied  $\forall i \in \mathcal{N}$.
\begin{multline*}
L^i(\gamma^{1*},\gamma^{2*},\dots,\gamma^{i*},\dots,\gamma^{N*}) \leq  L^i(\gamma^{1*},\dots,\gamma^{(i-1)*},\\ \gamma^{i},\gamma^{(i+1)*},\dots,\gamma^{N*}), \forall \gamma^i \in \Gamma^i.
\end{multline*}

\end{definition}

Let $\{u^{i*}\in U^i ; i\in \mathcal{N}\}$ be the actions of players corresponding to the equilibrium strategies $(\gamma^{1*},\gamma^{2*},\dots,\gamma^{N*})$. Unlike other information structures, in the open-loop information structure, the strategy $\gamma^i_k \in \Gamma^i_k, i\in \mathcal{N}, k \in \mathcal{K}$ is a constant function of $x_1$. Hence we can write that the $N$-tuple of actions $\{u^{i*} \in U^i ; i\in \mathcal{N}\}$ corresponding to the equilibrium strategies constitute an \textit{open-loop Nash equilibrium} if and only if the following inequalities are satisfied $\forall i \in \mathcal{N}$.
\begin{align*} \label{NE_eqn}
J^i(u^{1*},\dots,u^{i*},\dots,u^{N*},x_{1},x_2^*,\dots,x^*_{K+1}) \leq  J^i(u^{1*},\dots,\\ u^{(i-1)*},u^{i},u^{(i+1)*}, \dots,u^{N*},x_1,x_2,\dots,x_{K+1}), \forall u^i \in U^i,
\end{align*}
where $x_{k+1}^*=f_{k}(x_{k}^*,u_{k}^{1*},u_{k}^{2*},\dots,u_{k}^{N*}), k \in \mathcal{K}$ and $x_{k+1}=f_{k}(x_{k},u_{k}^{1*},\dots,u^{(i-1)*}_k,u^{i}_k,u^{(i+1)*}_k,\dots, u_{k}^{N*}), k \in \mathcal{K}$.

A commonly used line of analysis for DTDGs is given as follows. A necessary condition for the existence of an open-loop Nash equilibrium based on Pontryagin's principle can be derived as in Ba\c{s}ar and Olsder \cite{basar99dynamic}. A sufficient condition for linear-quadratic DTDGs can also be obtained by solving a recursive Riccati equation. Under some additional assumptions, it is shown in \cite{jank2003existence} that the two-point boundary value problem is uniquely solvable for a linear-quadratic case. A recent paper \cite{reddy2015OLNE_DTDG} by Reddy and Zaccour considers a class of linear-quadratic DTDGs with linear constraints on control and state. 
Another potential line of analysis is as follows. Since an open-loop Nash equilibrium is equivalently given by a sequence of actions $u^{i*} \in U^i, i \in \mathcal{N}$ (rather than strategies $\gamma^{i*} \in \Gamma^i, i \in \mathcal{N}$), the open-loop Nash equilibrium is also the equilibrium of a \textit{static}  game obtained by substituting the state equation~\eqref{state_eqn} into the cost function of each player. Evidently, this line of analysis succeeds only if the cost function and the dynamics take simple forms. In this paper, we introduce a different approach by reformulating DTDGs with the concept of a conjectured state as shown in the next section.

\section{Reformulation of DTDGs}
\label{sec_equivalent_formulation}

An important step in deriving the new results on the existence of equilibria is the reformulation of DTDGs with the notion of the \textit{conjectured state}. In the classical definition of DTDGs, the state of the game is defined as independent of players and it evolves ``in the background'' according to the state equation depending on the actions of players. For a player, a conceivable manner of playing in a DTDG is to  make an independent conjecture about the state trajectory. Motivated by this, we model the state of the game as dependent on players. We will refer to this new formulation as \textit{state conjecture formulation}. The new formulation has the advantage of easier analysis than the classical formulation. Furthermore, in this section, we show the equivalence of the state conjecture formulation to the classical one. A notion of conjecture is also used in \cite{jean2005consistent}. But the conjecture used there is about the rivals actions using the state or control information which is different from our notion of the state conjecture. 

 In the state conjecture formulation, the state space of player $i$ at stage $k$ is defined as set $X_{k+1}^{i}, \forall i\in \mathcal{N}, \forall k\in \mathcal{K}$. In order to make the new formulation consistent with the original, the state space is defined such that $X^i_{k+1} \equiv X_{k+1}, \forall i \in \mathcal{N}, \forall k \in \mathcal{K}$. The state space of player $i$ over stages is given by
$X^i \triangleq \prod\limits_{k=2}^{K+1}X^i_k $ and the state space of the game as $ X \triangleq \prod\limits_{i=1}^{N}X^i$. We define the state space of adversaries of player $i$ at stage $k$ as 
$ X^{-i}_{k} \triangleq \prod_{j \in \mathcal{N} \backslash \{i\}} X^{j}_{k}$ and the state space of adversaries of player $i$ for the game as $X^{-i} \triangleq \prod\limits_{k=2}^{K+1}X^{-i}_k$.
The dependence of the state space on players leads to respective changes in the state equation, the strategies and the cost function. 
The state mapping $f_{k}: X^{i}_{k}\times U^{1}_{k} \times U^{2}_{k} \times ... \times U^{N}_{k} \longrightarrow X^{i}_{k+1}$ is defined for each $k\in \mathcal{K}$ so that, 
\begin{equation} \label{state_eqn2}
x_{k+1}^{i}=f_{k}(x_{k}^{i},u_{k}^{1},u_{k}^{2},\dots,u_{k}^{N})
\end{equation} is the {state equation} of the DTDG in state conjecture formulation.
The permissible strategies in new formulation become $\gamma^i_k:X_1 \rightarrow U^i_k,  i \in \mathcal{N},  k \in \mathcal{K}$ 
and the cost function of player $i$ is a mapping given by
$J^{i}:U \times X_1 \times X^{i}_{2} \times \dots \times X^{i}_{K+1} \longrightarrow \mathbb{R}$ which is defined for each $i\in \mathcal{N}$.

\subsubsection{Player $i$'s problem} \label{playeri_in_newformulation}
Consider a player's problem in the state conjecture formulation of a DTDG. Let $i \in \mathcal{N}$ be a player in the game. Given the rivals actions $u^{-i} = (u^{-i}_1,\dots,u^{-i}_K) \in U^{-i}$  and the initial state $x_1$, the player $i$'s problem is denoted by $P_i (u^{-i};x_1$) and is given by the following optimisation problem.
$$
\problemsmall{$P_i (u^{-i};x_1$)}
{u^i,x^i}
{J^i(u^i,x^i;u^{-i},x_1)}
{(u^i,x^i) \in \Omega_i (u^{-i};x_1),}
$$
where,
\begin{align*}
{\Omega_{i}}({u^{-i};x_1}) &= 
\{
\hat{u}^{i}_1,\dots,\hat{u}^{i}_{K}, \hat{x}^{i}_{2}, \dots \hat{x}^{i}_{K+1}| \hat{u}^{i}_{k}\in U^{i}_{k},\forall k \in \mathcal{K}, \\ &\hat{x}^i_{k+1} \in X^i_{k+1}, \forall k \in \mathcal{K}, \hat{x}^{i}_{2} = f_{1}(x_{1}, \hat{u}^{i}_{1},u^{-i}_{1}),   \\&\hat{x}^{i}_{k+1} = f_{k}(\hat{x}^{i}_{k}, \hat{u}^{i}_{k}, u^{-i}_{k}), \forall k \in \mathcal{K}\backslash \{1\}
\}.
\end{align*}
Since the state trajectory of the game $x^i = (x^i_2,\dots,x^i_{K+1}) \in X^i$ is dependent on players, it is also considered as a decision variable as opposed to the classical formulation.  
Define the Cartesian product of the feasible set $\Omega_{i}({u^{-i};x_1})$ as  
$\Omega(u,x) \triangleq\prod_{i \in \mathcal{N}}\Omega_i (u^{-i};x_1)$, where $u=(u^1,u^2,\dots,u^N) \in U$ and $x=(x^1,x^2,\dots,x^N) \in X$. The fixed points of the set value map $\Omega(u,x)$ is denoted by $\mathcal{F}$ which is given by
$\mathcal{F} \triangleq \{(u,x)| (u,x)\in \Omega(u,x) \}.$ We can write the set $\mathcal{F}$ in the expanded form as follows. 
\begin{align*}
\mathcal{F}=\{(u,x)| u\in U, x^i \in X^i, \forall i \in \mathcal{N}, x^i_2=f_1(x_1,u^1_1,\dots, u^N_1), \\ \forall i \in \mathcal{N}, x^i_{k+1}=f_k(x^i_k,u^1_k,\dots,u^N_k), \forall i \in \mathcal{N}, \forall k \in \mathcal{K}\}.
\end{align*}
 Notice that the initial state $x_1$ is a parameter on which the set $\mathcal{F}$ depends. So in the set $\mathcal{F}$, for a fixed $x_1$, the state trajectories of players are consistent, i.e., for all $i \in \mathcal{N}$, $x^i=x^j, \forall j \in \mathcal{N}\backslash\{i\}$. Using the state conjecture formulation of DTDG, we redefine the open-loop Nash equilibrium as follows.

\begin{definition}
    A tuple of strategies $\{(u^{i*},x^{i*});i\in \mathcal{N}\} \in \mathcal{F}$ is an open-loop Nash equilibrium of DTDG if $\forall i \in \mathcal{N}$, given $u^{-i*} \in U^{-i}$, 
   \begin{multline*}
    J^{i}(u^{i*},x^{i*};u^{-i*},x_1) \leq
     J^{i}({u}^i,{x}^i;u^{-i*},x_1),
   \\ \forall ({u}^i,{x}^i) \in \Omega_i (u^{-i*};x_1).
     \end{multline*}
\end{definition}
\subsubsection{Equivalence to the classical formulation} 
The equivalence of the classical and the state conjecture formulation in terms of the open-loop Nash equilibria is given in the following proposition.
\begin{proposition}
Consider a DTDG as defined in Definition \ref{def_classical_DTDG}. Suppose the state space is equivalent in the state conjecture and the classical formulations ($X^i_{k+1} \equiv X_{k+1}, \forall i \in \mathcal{N}, k\in \mathcal{K}$). For a given initial state $x_1 \in X_1$, then $(u^*,x^*) \in \mathcal{F}$ is an open-loop Nash equilibrium of a DTDG in the state conjecture formulation if and only if $u^* \in U$ is an open-loop Nash equilibrium in the classical formulation with the state trajectory at equilibrium given by any component $x^{i*} \in X^i$ of $x^{*}$.    
\end{proposition}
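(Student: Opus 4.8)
The plan is to exploit the fact that, in the state conjecture formulation, once the rivals' actions $u^{-i}$ and the initial state $x_1$ are fixed, the conjectured trajectory $x^i=(x^i_2,\dots,x^i_{K+1})$ is not genuinely a free variable: the equality constraints defining $\Omega_i(u^{-i};x_1)$ pin it down uniquely from $(u^i,u^{-i},x_1)$ through the recursion $x^i_2=f_1(x_1,u^i_1,u^{-i}_1)$ and $x^i_{k+1}=f_k(x^i_k,u^i_k,u^{-i}_k)$. First I would show, by induction on $k\in\mathcal{K}$, that for fixed $(u^{-i},x_1)$ the map $u^i\mapsto(u^i,\xhat^i(u^i))$ --- where $\xhat^i(u^i)$ denotes this unique trajectory --- is a bijection from $U^i$ onto $\Omega_i(u^{-i};x_1)$, and that $\xhat^i(u^i)$ is exactly the classical state trajectory generated by the action profile $(u^i,u^{-i})$ from $x_1$ via \eqref{state_eqn}.

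Next, using this identification together with the hypothesis $X^i_{k+1}\equiv X_{k+1}$ (so that the cost map $J^i$ of the state conjecture formulation is the same function as the classical $J^i$), I would conclude that $P_i(u^{-i};x_1)$ is equivalent to the classical best-response problem of player $i$ against $u^{-i}$: namely, minimising over $u^i\in U^i$ the value $J^i(u^i,u^{-i},x_1,\xhat^i(u^i))$ obtained by substituting the state equation into the cost. In particular, $u^i$ solves the classical player-$i$ problem against $u^{-i}$ if and only if $(u^i,\xhat^i(u^i))$ solves $P_i(u^{-i};x_1)$, and the optimal values coincide.

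It then remains to push this equivalence through both directions of the equilibrium statement. For the forward direction, given $(u^*,x^*)\in\mathcal{F}$ an open-loop Nash equilibrium of the state conjecture formulation, membership in $\mathcal{F}$ forces $x^{i*}=\xhat^i(u^{i*})$ (with rivals fixed at $u^{-i*}$) for every $i$ and, as already observed in the text, consistency $x^{i*}=x^{j*}$ across players; hence the common value of the $x^{i*}$ is precisely the classical trajectory of $u^*$ from $x_1$, and the equilibrium inequalities for each $P_i(u^{-i*};x_1)$ translate, via the reduction above, into the classical Nash inequalities for $u^*$. For the converse, given a classical open-loop Nash equilibrium $u^*$ with trajectory $\xbar$, I would set $x^{i*}:=\xbar$ for all $i$; then $(u^*,x^*)\in\mathcal{F}$ by construction, and the classical Nash inequalities yield, again via the reduction, that $(u^{i*},x^{i*})$ solves $P_i(u^{-i*};x_1)$ for each $i$, i.e. $(u^*,x^*)$ is an open-loop Nash equilibrium of the state conjecture formulation, with equilibrium trajectory given by any component $x^{i*}$.

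The only real content is the inductive identification of the unique conjectured trajectory with the classical one, and the consequent observation that $\Omega_i(u^{-i};x_1)$ is just the graph of a function over $U^i$; everything else is bookkeeping. The main (and rather mild) obstacle is to be careful that the objective values genuinely coincide under the identification $X^i_{k+1}\equiv X_{k+1}$, so that optimality --- not merely feasibility --- transfers in both directions; once that is in hand, the equivalence of equilibria is immediate.
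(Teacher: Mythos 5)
Your proposal is correct and follows essentially the same route as the paper: the paper's proof also reduces $P_i(u^{-i*};x_1)$ to the classical action-space best-response problem by substituting the state equation into the cost, which is exactly the identification of $\Omega_i(u^{-i};x_1)$ with the graph of the trajectory map $u^i\mapsto \xhat^i(u^i)$ that you make explicit. Your version merely spells out the uniqueness/bijection argument and the transfer of optimal values that the paper leaves implicit.
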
    

\begin{proof}
Let $(u^*,x^*) \in \mathcal{F}$ be an equilibrium of DTDG in state conjecture formulation. Consider the problem of player $i$ in state conjecture formulation given by $P_i (u^{-i*};x_1)$, where $u^{-i*}$ is the equilibrium actions of other players and $x_1$, the initial state. The player $i$'s problem in the state conjecture formulation with state expanded over stages is denoted by problem $\mathcal{P}_i (u^{-i*};x_1)$ which is given as follows. 
$$
     \problemsmall{$\mathcal{P}_i (u^{-i*};x_1)$}
     {u^i,x^i}
     {J^i(u^i,x^i_2,\dots,x^i_{K+1};u^{-i*},x_1)}
                  {(u^i,x^i_2,\dots,x^i_{K+1}) \in \Omega_i (u^{-i*};x_1).}
     $$
By substituting the state equation~\eqref{state_eqn2} for the states in the cost function from the feasible set, we redefine the problem in action space as given below. 
$$
     \problemsmall{}
     {u^i,x^i}
     {J^i(u^i,f_1(x_1,u^i_1,u^{-i*}_1),\dots;u^{-i*},x_1)}
                  {u^i_k \in U^i_k, \forall k \in \mathcal{K}. }
     $$ 
This is indeed the classical DTDG problem of player $i$ in the action space. Hence, any equilibrium in the state conjecture formulation is an equilibrium in the classical formulation. Conversely, for any equilibrium in the space of actions, player $i$'s problem can be written as $\mathcal{P}_i (u^{-i*};x_1)$ and hence an equilibrium in the state conjecture formulation.
Hence $(u^*,x^*) \in \mathcal{F}$ is an equilibrium in state conjecture formulation if and only if $u^* \in U$ is an equilibrium in the original formulation.
\end{proof}
Thus, we have a new formulation equivalent to the classical one. We will exploit this new formulation to derive our existence results.

\section{QUASI-POTENTIAL DTDG} \label{sec_quasi}
Our first result on the existence of open-loop Nash equilibria for DTDGs is presented in this section. The result provides conditions for the  existence of equilibria for a class of games called quasi-potential DTDGs.  The idea is to split the cost function of players into two parts, one part that admits a potential function and another part which is identical for all players.  A quasi-potential game is a special case of a potential game \cite{monderer96potential}, introduced by Kulkarni et. al. in \cite{kulkarni2015existence} for multi-leader multi-follower games. Here, we extend the concept of quasi-potential to DTDGs. The definition of quasi-potential DTDGs is given as follows.

\begin{definition} 
\label{quasi_def}
Consider a DTDG defined in state conjecture formulation with the cost function of players given by $\{J^1, J^2,\dots, J^N\}$. The DTDG is said to be \textit{quasi-potential} if the cost function of players admits a quasi-potential function for which the following has to hold.
\begin{enumerate}
\item There exist functions $\Phi_1, \Phi_2,\dots,\Phi_N$ and $h$ such that for all $i \in \mathcal{N}$, player $i$'s cost function can be written as $J^i(u,x_1,x^i) \equiv \Phi_i(u,x_1)+h(u,x_1,x^i), \forall u \in U, \forall x^i \in X^i,$ 

\item and there exist a function $\pi$ such that for all $i \in \mathcal{N}$ and for all ${u}^{-i} \in U^{-i}$, we have $\Phi_i(\tilde{u}^i;u^{-i},x_1)-\Phi_i(\hat{u}^{i};u^{-i},x_1)= \pi(\tilde{u}^i;u^{-i},x_1)-\pi(\hat{u}^{i};u^{-i},x_1), \forall  \tilde{u}^i,\hat{u}^{i} \in U^i$.
\end{enumerate}
\end{definition}
The function $\pi+h$ is termed as a \textit{quasi-potential function} of the quasi-potential DTDG. Note that the function $h$ is independent of players, but the argument of the function is dependent on players. We utilize this structure of quasi-potential DTDGs to provide the existence of equilibria  in such games. 

\subsection{Existence of equilibria in quasi-potential DTDGs}\label{quasi_existence}
We need to establish a lemma in order to prove the existence of open-loop Nash equilibria in quasi-potential DTDGs. For that, consider a set $\mathcal{F}^q$ defined as follows.
\begin{align*}
    \mathcal{F}^q &\triangleq \{(u^1_{1:K},\dots,u^N_{1:K},w_2,w_3,\dots,w_K,w_{K+1})| u^1_{1:K},\dots,\\& u^N_{1:K} \in U, w_{k+1} \in X_{k+1}, \forall k \in \mathcal{K}, w_2=f_1(x_1,u^1_1, \dots,u^N_1),\\& w_{k+1}=f_k(w_k,u^1_k,\dots,u^N_k), \forall k \in \mathcal{K} \backslash \{1\} \}.
\end{align*}
Note that the initial state $x_1$ is a parameter on which the set $\mathcal{F}^q$ depends. Now we consider the following lemma.
\begin{lemma}\label{prop1}
   Consider a DTDG with initial state $x_1$. For some $i \in \mathcal{N}$, given $u^{-i} \in U^{-i}$, a point $(u^i,w_2,w_3,\dots,w_{K+1})$ is feasible for the problem given by $P_i (u^{-i}; x_1$) if and only if $(u,w_2,w_3,\dots,w_{K+1}) \in \mathcal{F}^q$. That means,
    \begin{multline*}
        (u^i,w_2,w_3,\dots,w_{K+1})\in \Omega_i (u^{-i};x_1) \iff \\(u,w_2,w_3,\dots,w_{K+1}) \in \mathcal{F}^q. 
    \end{multline*}
\end{lemma}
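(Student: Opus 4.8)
This is essentially a bookkeeping lemma: it says that the per-player feasible set $\Omega_i(u^{-i};x_1)$, which is defined by the state recursion driven by the $i$-th player's own control $u^i$ together with the \emph{given} rivals' controls $u^{-i}$, coincides (once we rename the conjectured state $x^i$ to the generic symbol $(w_2,\dots,w_{K+1})$) with the slice of $\mathcal{F}^q$ obtained by fixing $u^{-i}$. The plan is to prove the two implications separately by unwinding both definitions and matching the defining equations term by term.

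\textbf{($\Rightarrow$).} Suppose $(u^i,w_2,\dots,w_{K+1})\in\Omega_i(u^{-i};x_1)$. By definition of $\Omega_i$ this means $u^i_k\in U^i_k$ and $w_{k+1}\in X_{k+1}$ for all $k\in\mathcal{K}$, together with $w_2=f_1(x_1,u^i_1,u^{-i}_1)$ and $w_{k+1}=f_k(w_k,u^i_k,u^{-i}_k)$ for $k\in\mathcal{K}\backslash\{1\}$. Form the full control tuple $u=(u^i,u^{-i})\in U$; since $u^{-i}\in U^{-i}$ was given and $u^i\in U^i$, we have $u\in U$. Now observe that $f_k(\cdot,u^i_k,u^{-i}_k)$ is just $f_k(\cdot,u^1_k,\dots,u^N_k)$ written with the components grouped differently, so the recursion $w_2=f_1(x_1,u^1_1,\dots,u^N_1)$, $w_{k+1}=f_k(w_k,u^1_k,\dots,u^N_k)$ for $k\in\mathcal{K}\backslash\{1\}$ holds verbatim. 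Together with $u\in U$ and $w_{k+1}\in X_{k+1}$ this is exactly the membership condition for $\mathcal{F}^q$, so $(u,w_2,\dots,w_{K+1})\in\mathcal{F}^q$.

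\textbf{($\Leftarrow$).} Conversely, suppose $(u,w_2,\dots,w_{K+1})\in\mathcal{F}^q$ where $u=(u^i,u^{-i})$ has its $-i$ components equal to the given $u^{-i}$. Membership in $\mathcal{F}^q$ gives $u\in U$ (hence $u^i\in U^i$), $w_{k+1}\in X_{k+1}$ for all $k$, and the recursion $w_2=f_1(x_1,u^1_1,\dots,u^N_1)$, $w_{k+1}=f_k(w_k,u^1_k,\dots,u^N_k)$ for $k\in\mathcal{K}\backslash\{1\}$. Regrouping the control arguments as $(u^i_k,u^{-i}_k)$, these are precisely $w_2=f_1(x_1,u^i_1,u^{-i}_1)$ and $w_{k+1}=f_k(w_k,u^i_k,u^{-i}_k)$, which together with $u^i_k\in U^i_k$ and $w_{k+1}\in X_{k+1}$ constitute the defining conditions of $\Omega_i(u^{-i};x_1)$ (after identifying $\hat x^i_{k+1}$ with $w_{k+1}$). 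Hence $(u^i,w_2,\dots,w_{K+1})\in\Omega_i(u^{-i};x_1)$, completing the equivalence.

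\textbf{Main obstacle.} There is no real obstacle; the only subtlety worth stating carefully is the notational identification that fixing the $-i$ coordinates of the control tuple in $\mathcal{F}^q$ at the prescribed value $u^{-i}$ turns the symmetric-looking recursion of $\mathcal{F}^q$ into the asymmetric (player-$i$-centric) recursion defining $\Omega_i$, and that the ``conjectured state'' variable $x^i$ of $\Omega_i$ plays the role of the auxiliary variables $w_2,\dots,w_{K+1}$ in $\mathcal{F}^q$. Once this correspondence is made explicit, both inclusions are immediate from the definitions.
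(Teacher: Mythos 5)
Your proof is correct and follows essentially the same route as the paper's: both directions are handled by unwinding the definitions of $\Omega_i(u^{-i};x_1)$ and $\mathcal{F}^q$, merging or separating $u^i$ and $u^{-i}$ into the full tuple $u$, and observing that the state recursion equations coincide under this regrouping. No substantive differences from the paper's argument.
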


\begin{proof}
    ``$\Rightarrow$"
    For given $u^{-i} \in U^{-i}$ and initial state $x_1$, consider a point $(u^i,w_2,w_3,\dots,w_{K+1})\in \Omega_i (u^{-i};x_1)$. That means, $u^i \in U^i, w_2=f_1(x_1,u^i_1,u^{-i}_1),  w_{k+1}=f_k(w_k,u^i_k,u^{-i}_k), \forall k \in \mathcal{K} \backslash \{1\}$. Now by combining $u^i \in U^i$ and $u^{-i} \in U^{-i}$ we can rewrite the above equations as $u \in U, w_2=f_1(x_1,u^i_1,u^{-i}_1),w_{k+1}=f_k(w_k,u^i_k,u^{-i}_k),\forall k \in \mathcal{K} \backslash \{1\}$. Hence, $(u,w_2,w_3,\dots,w_{K+1}) \in \mathcal{F}^q$.
    
    ``$\Leftarrow$" Suppose $(u,w_2,w_3,\dots,w_{K+1}) \in \mathcal{F}^q$. Hence, $u \in U, w_2=f_1(x_1,u^i_1,u^{-i}_1), w_{k+1}=f_k(w_k,u^i_k,u^{-i}_k), \forall k \in \mathcal{K} \backslash \{1\}$. For some $i\in \mathcal{N}$, we separate the actions of player $i$  and adversaries as $u^i \in U^i$ and $u^{-i} \in U^{-i}$. For some fixed  $u^{-i}\in U^{-i}$ and $x_1$, the equations can be rewritten as $u^i \in U^i, w_2=f_1(x_1,u^i_1,u^{-i}_1), w_{k+1}=f_k(w_k,u^i_k,u^{-i}_k),\forall k \in \mathcal{K} \backslash \{1\}$. Hence, $(u^i,w_2,w_3,\dots,w_{K+1})\in \Omega_i (u^{-i};x_1)$.
\end{proof}
For a quasi-potential DTDG with quasi-potential function $\pi + h$, consider an optimisation problem denoted by $P_q$. 
    $$
    \problemsmall{$P_q$}
    {({u},w_2,\dots,w_{K+1})}
    {\pi({u},x_1)+h({u},x_1,w_2,w_3,\dots,w_{K+1})}
                 {({u},w_2,w_3,\dots,w_{K+1}) \in \mathcal{F}^{q} }
    $$
Note that the objective function of the problem $P_q$ is the quasi-potential function $\pi+h$ and the feasible set is $\mathcal{F}^q$. We relate the solution of $P_q$ to an equilibrium of a quasi-potential DTDG which is given in the following theorem. 
\begin{theorem}
\label{minNE_quasi}
Consider a quasi-potential DTDG with a quasi-potential function $\pi+h$. Consider the optimisation problem given by $P_q$ for the quasi-potential DTDG. If $({u},w_2,w_3,\dots,w_{K+1})\in \mathcal{F}^{q}$ is a global minimizer of the problem $P_q$, then $({u},x) \in \mathcal{F}$ is an open-loop Nash equilibrium of the quasi-potential DTDG with $x^i_{k+1}=w_{k+1},\forall k\in \mathcal{K}, \forall i\in \mathcal{N}.$ 
\end{theorem}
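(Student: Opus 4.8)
The plan is to check the two defining requirements of an open-loop Nash equilibrium in the state conjecture formulation directly. First I would verify that $(u,x)\in\mathcal{F}$ with $x^i_{k+1}=w_{k+1}$: since $(u,w_2,\dots,w_{K+1})\in\mathcal{F}^q$, the state relations $w_2=f_1(x_1,u^1_1,\dots,u^N_1)$ and $w_{k+1}=f_k(w_k,u^1_k,\dots,u^N_k)$ hold, so copying the trajectory $w$ into each player's slot reproduces exactly the relations defining $\mathcal{F}$ (in particular all players' conjectured trajectories coincide, as they must inside $\mathcal{F}$). This part is immediate.

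The substantive step is the unilateral deviation inequality. Fix $i\in\mathcal{N}$ and let $u^{-i}$ denote the adversary block of the minimizer. For an arbitrary feasible deviation $(\tilde u^i,\tilde x^i)\in\Omega_i(u^{-i};x_1)$, I would use the quasi-potential splitting $J^i=\Phi_i+h$ together with property~2 of Definition~\ref{quasi_def} (applied with $\hat u^i=u^i$, so that the $\Phi_i$-difference equals the corresponding $\pi$-difference) to rewrite
\begin{align*}
&J^i(\tilde u^i,\tilde x^i;u^{-i},x_1)-J^i(u^i,x^i;u^{-i},x_1) \\
&\qquad = \big[\pi(\tilde u^i;u^{-i},x_1)+h(\tilde u^i;u^{-i},x_1,\tilde x^i)\big]-\big[\pi(u^i;u^{-i},x_1)+h(u^i;u^{-i},x_1,x^i)\big].
\end{align*}
Hence the deviation cannot strictly help iff the $P_q$-objective evaluated at $\big((\tilde u^i,u^{-i}),\tilde x^i\big)$ is no smaller than its value at $(u,w_2,\dots,w_{K+1})$, recalling that $x^i=(w_2,\dots,w_{K+1})$ by construction.

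That inequality is exactly what global optimality for $P_q$ delivers, once feasibility is transferred by Lemma~\ref{prop1}: from $(\tilde u^i,\tilde x^i)\in\Omega_i(u^{-i};x_1)$ the lemma yields $\big((\tilde u^i,u^{-i}),\tilde x^i\big)\in\mathcal{F}^q$, while $(u,w_2,\dots,w_{K+1})\in\mathcal{F}^q$ by hypothesis; since the latter globally minimizes $\pi(\cdot,x_1)+h(\cdot,x_1,\cdot)$ over all of $\mathcal{F}^q$ — a fortiori over the sub-collection of points whose adversary block is frozen at $u^{-i}$ — the required inequality holds, so the displayed cost change is nonnegative. As $i$ was arbitrary, $(u,x)$ satisfies the equilibrium condition for every player, i.e.\ it is an open-loop Nash equilibrium of the quasi-potential DTDG.

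I do not anticipate a genuine obstacle: this is the standard ``a global minimizer of the potential is a Nash equilibrium'' argument, specialized to the quasi-potential structure where the common term $h$ survives and the player-specific terms $\Phi_i$ collapse to the shared $\pi$. The only points requiring care are (a) the bookkeeping identification, via Lemma~\ref{prop1}, between feasible unilateral deviations in $\Omega_i(u^{-i};x_1)$ and points of $\mathcal{F}^q$ with a prescribed adversary block, and (b) the observation that a minimizer over all of $\mathcal{F}^q$ is automatically a minimizer over the slice relevant to player $i$'s comparison, so the joint (rather than per-player) nature of $P_q$ causes no difficulty.
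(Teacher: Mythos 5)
Your proposal is correct and follows essentially the same route as the paper's proof: freeze the adversary block $u^{-i}$ of the minimizer, use Lemma~\ref{prop1} to identify unilateral deviations in $\Omega_i(u^{-i};x_1)$ with points of $\mathcal{F}^q$, and invoke property~2 of Definition~\ref{quasi_def} to convert the $\pi+h$ inequality into the $J^i=\Phi_i+h$ inequality. The only difference is presentational (you argue from the deviation back to the minimizer inequality, the paper argues forward from the minimizer inequality), which does not affect correctness.
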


\begin{proof}
Let $({u},w_2,w_3,\dots,w_{K+1}) \in \mathcal{F}^{q}$ be the minimizer of the problem $P_q$. Then,
\begin{align*}
{\pi({u},x_1)+h({u},x_1,w_2,w_3,\dots,w_{K+1})} \leq   {\pi(\tilde{u}),x_1}+  {h(\tilde{u},x_1,}\\{\tilde{w}_2,  \dots,\tilde{w}_{K+1})}, \forall (\tilde{u},\tilde{w}_2,\tilde{w}_3,\dots,\tilde{w}_{K+1}) \in \mathcal{F}^{q}.
\end{align*}
By splitting the actions $u, \tilde{u} \in U$ over player $i$ as $u=(u^i,u^{-i})$ and $\tilde{u}=(\tilde{u}^{i},\tilde{u}^{-i})$ respectively, we can rewrite as,
\begin{multline*}
{\pi(u^i,u^{-i},x_1)+h(u^i,u^{-i},x_1,w_2,w_3,\dots,w_{K+1})} \leq  \\ {\pi(\tilde{u}^{i},\tilde{u}^{-i},x_1)+h(\tilde{u}^{i},x_1,\tilde{u}^{-i},\tilde{w}_2,\tilde{w}_3,\dots,\tilde{w}_{K+1})}, \\ \forall (\tilde{u}^{i},\tilde{u}^{-i},\tilde{w}_2,\tilde{w}_3,\dots,\tilde{w}_{K+1}) \in \mathcal{F}^{q}.
\end{multline*}
The inequality still holds even if we replace $\tilde{u}^{-i} \in U^{-i}$ by $u^{-i} \in U^{-i}$. Hence, the above inequality can be rewritten as,
\begin{multline*}
{\pi({u^i,u^{-i},x_1})+h({u^i,u^{-i}},x_1,w_2,w_3,\dots,w_{K+1})} \leq \\ {\pi(\tilde{u}^{i},{u}^{-i},x_1)+ h(\tilde{u}^{i},{u}^{-i},,x_1\tilde{w}_2,\tilde{w}_3,\dots,\tilde{w}_{K+1})}, \\ \forall (\tilde{u}^{i},{u}^{-i},\tilde{w}_2,\tilde{w}_3,\dots,\tilde{w}_{K+1}) \in \mathcal{F}^{q}. 
\end{multline*}
Using Lemma \ref{prop1}, the above inequality can be rewritten as,
\begin{multline*}
{\pi({u^i,u^{-i},x_1})+h({u^i,u^{-i}},x_1,w_2,w_3,\dots,w_{K+1})} \leq \\ {\pi(\tilde{u}^{i},u^{-i},x_1)+h(\tilde{u}^{i},u^{-i},x_1,\tilde{w}_2,\tilde{w}_3,\dots,\tilde{w}_{K+1})},\\ \forall (\tilde{u}^{i},\tilde{w}_2,\tilde{w}_3,\dots,\tilde{w}_{K+1}) \in \Omega_{i}(u^{-i};x_1), \forall i \in \mathcal{N}.
\end{multline*}
Since the game is a quasi-potential DTDG and $(w_2,w_3,\dots,w_{K+1})=x^{i} \in X^i, \forall i \in \mathcal{N}$,
\begin{multline*}
{\Phi_i({u^i,u^{-i},x_1})+h({u^i,u^{-i}},x_1,x^{i})} \leq \\ {\Phi_i(\tilde{u}^{i},u^{-i},x_1)+h(\tilde{u}^{i},u^{-i},x_1,\tilde{w}_2,\tilde{w}_3,\dots,\tilde{w}_{K+1})},\\ \forall (\tilde{u}^{i},\tilde{w}_2,\tilde{w}_3,\dots,\tilde{w}_{K+1}) \in \Omega_{i}(u^{-i};x_1), \forall i \in \mathcal{N}.
\end{multline*}
Hence, $ \forall i \in \mathcal{N}$,
\begin{multline*}
J^{i}({u},x^i,x_1) \leq  J^{i}(\tilde{u}^i ,x_1,\tilde{w}_2,\tilde{w}_3,\dots,\tilde{w}_{K+1};u^{-i}), \\ \forall (\tilde{u}^{i},\tilde{w}_2,\tilde{w}_3,\dots,\tilde{w}_{K+1}) \in \Omega_i (u^{-i};x_1).
\end{multline*}
\end{proof}
Thus, if the optimisation problem $P_q$ has a minimizer then there exists an open-loop Nash equilibrium for the quasi-potential DTDG.
The following results gives conditions which guarantee the existence of a minimum for the problem $P_q$.
\begin{corollary}\label{Coro_with_compactness}
A quasi-potential DTDG admits an open-loop Nash equilibrium if the quasi-potential function, ${\pi+h}$ is continuous and the set $\mathcal{F}^q$ is non empty and compact. The set  $\mathcal{F}^q$ is compact if the sets $U,X$ are compact and $f_k$ is continuous for all $k \in \mathcal{K}$.
\end{corollary}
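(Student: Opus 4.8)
The plan is to combine the Weierstrass extreme value theorem with Theorem \ref{minNE_quasi}. That theorem already reduces the existence of an open-loop Nash equilibrium to the existence of a \emph{global minimizer} of the optimisation problem $P_q$, so it suffices to give conditions under which $P_q$ attains its infimum. Since the objective of $P_q$ is the quasi-potential function $\pi+h$, assumed continuous, and the feasible region is $\mathcal{F}^q$, the classical extreme value theorem (a continuous function attains its minimum on a nonempty compact set) guarantees a global minimizer $(u,w_2,\dots,w_{K+1}) \in \mathcal{F}^q$. Theorem \ref{minNE_quasi} then yields that $(u,x) \in \mathcal{F}$ with $x^i_{k+1}=w_{k+1}$ for all $k \in \mathcal{K}$, $i \in \mathcal{N}$, is an open-loop Nash equilibrium of the quasi-potential DTDG, which proves the first assertion.

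It remains to justify the sufficient condition for compactness of $\mathcal{F}^q$. I would introduce the map $\psi=(\psi_2,\dots,\psi_{K+1})\colon U \to X_2 \times \dots \times X_{K+1}$ defined recursively by $\psi_2(u)=f_1(x_1,u^1_1,\dots,u^N_1)$ and $\psi_{k+1}(u)=f_k(\psi_k(u),u^1_k,\dots,u^N_k)$ for $k \in \mathcal{K}\setminus\{1\}$; this is well defined since, as noted in the discussion preceding Lemma \ref{prop1}, a fixed action profile generates a unique state trajectory. Because each $f_k$ is continuous, $\psi$ is continuous as a finite composition of continuous maps. By the definition of $\mathcal{F}^q$ one has $\mathcal{F}^q=\{(u,\psi_2(u),\dots,\psi_{K+1}(u)) : u \in U\}$, i.e. $\mathcal{F}^q$ is the graph of $\psi$, hence the image of the compact set $U$ under the continuous map $u \mapsto (u,\psi(u))$, and therefore compact; it is nonempty whenever $U$ is. Alternatively, one may observe that $\mathcal{F}^q$ is the intersection of the compact set $U \times X$ with the preimages of $\{0\}$ under the continuous maps $(u,w)\mapsto w_{k+1}-f_k(w_k,u^1_k,\dots,u^N_k)$, hence a closed subset of a compact set; it is in this second argument that compactness of $X$ (rather than merely of $U$) is actually invoked, to obtain closedness and boundedness of the feasible set.

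There is essentially no serious obstacle here: the substantive content is carried entirely by Theorem \ref{minNE_quasi}, and this corollary is a direct application of Weierstrass. The only point requiring mild care is the compactness claim for $\mathcal{F}^q$ — one must check that the defining equality constraints are closed conditions (immediate from continuity of the $f_k$) and that the feasible set is bounded (immediate from compactness of $U$, or of $U \times X$). I would present the graph-of-$\psi$ argument as the cleanest route, since it simultaneously settles nonemptiness, closedness, and boundedness in a single line, and then combine it with the first paragraph to conclude.
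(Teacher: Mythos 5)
Your proposal is correct and follows exactly the route the paper intends: the paper gives no written proof beyond the remark that the corollary ``can be easily proved using Weierstrass Theorem,'' and your argument---Weierstrass applied to the continuous quasi-potential function on the nonempty compact set $\mathcal{F}^q$, followed by Theorem \ref{minNE_quasi}---is precisely that, with a clean and valid justification of the compactness of $\mathcal{F}^q$ that the paper leaves implicit.
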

Corollary \ref{Coro_with_compactness} can be easily proved using {Weierstrass Theorem}.
But the requirement of compactness of state space in Corollary \ref{Coro_with_compactness} may be restrictive in some cases. The following result gives the existence of equilibria without the condition of compactness of state or action spaces, instead imposing the condition of coercivity of quasi-potential function. The coercivity definition is given as follows.

\begin{definition}
A quasi-potential function  ${\pi+h}$ is coercive if \\ $\underset{(u,w_2,\dots,w_{K+1})\in \mathcal{F}^q }{\underset{||u,w_2,\dots,w_{K+1}||\rightarrow\infty}{\liminf}}  {\pi({u},x_1)+h({u},x_1,w_2,\dots,w_{K+1})} = \infty.$
\end{definition}

\begin{corollary}\label{Cor:quasi_coercive}
A quasi-potential DTDG admits an open-loop Nash equilibrium if the quasi-potential function, ${\pi+h}$ is continuous and coercive and $f_k$ is continuous for each $k \in \mathcal{K}$.
\end{corollary}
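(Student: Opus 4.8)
The plan is to reduce the whole claim to Theorem~\ref{minNE_quasi}: it suffices to show that the optimisation problem $P_q$ attains its infimum, i.e.\ possesses a global minimizer, because Theorem~\ref{minNE_quasi} then immediately delivers the open-loop Nash equilibrium $(u,x)\in\mathcal{F}$ with $x^i_{k+1}=w_{k+1}$. So the entire argument is a ``coercive Weierstrass'' statement applied to the objective $\phi(u,w_2,\dots,w_{K+1})\triangleq\pi(u,x_1)+h(u,x_1,w_2,\dots,w_{K+1})$ over the feasible set $\mathcal{F}^q$, which is precisely Corollary~\ref{Coro_with_compactness} with compactness of $U,X$ traded for coercivity of $\pi+h$.

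First I would record three structural facts. (i) $\mathcal{F}^q$ is nonempty: pick any $u\in U$, put $w_2=f_1(x_1,u^1_1,\dots,u^N_1)$ and $w_{k+1}=f_k(w_k,u^1_k,\dots,u^N_k)$ for $k\in\mathcal{K}\backslash\{1\}$; since each $f_k$ maps into the corresponding state space, the resulting tuple lies in $\mathcal{F}^q$. (ii) $\mathcal{F}^q$ is closed: the maps $u\mapsto w_2(u),\,u\mapsto w_3(u),\dots,u\mapsto w_{K+1}(u)$ obtained by iterating the dynamics are finitely many compositions of the continuous maps $f_k$, hence continuous in $u$, so $\mathcal{F}^q$ is exactly the graph of a continuous map on the control set $U$, and such a graph is closed in $U\times X$ (hence in the ambient Euclidean space) because $U$ and $X$ are closed. (iii) $\phi$ is bounded below on $\mathcal{F}^q$: by coercivity there is $R$ such that, for any fixed $z_0\in\mathcal{F}^q$, $\phi(z)>\phi(z_0)$ whenever $z\in\mathcal{F}^q$ and $\|z\|>R$; on the remaining set $\mathcal{F}^q\cap\{\|z\|\le R\}$, which is closed and bounded, hence compact, the continuous $\phi$ attains a finite minimum, so $m\triangleq\inf_{\mathcal{F}^q}\phi$ is finite.

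Next I would run the standard minimizing-sequence argument: choose $\{z^{(n)}\}\subseteq\mathcal{F}^q$ with $\phi(z^{(n)})\to m$. If $\{z^{(n)}\}$ were unbounded, a subsequence would satisfy $\|z^{(n_j)}\|\to\infty$, and coercivity would force $\phi(z^{(n_j)})\to\infty\neq m$, a contradiction; hence $\{z^{(n)}\}$ is bounded. By Bolzano--Weierstrass a subsequence converges, $z^{(n_j)}\to z^\star$, and by fact (ii) $z^\star\in\mathcal{F}^q$; continuity of $\phi$ gives $\phi(z^\star)=\lim_j\phi(z^{(n_j)})=m$, so $z^\star=(u^\star,w_2^\star,\dots,w_{K+1}^\star)$ is a global minimizer of $P_q$. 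Applying Theorem~\ref{minNE_quasi} to this minimizer yields the open-loop Nash equilibrium $(u^\star,x^\star)\in\mathcal{F}$ with $x^{i\star}_{k+1}=w^\star_{k+1}$ for all $i\in\mathcal{N}$ and $k\in\mathcal{K}$.

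The only genuinely delicate point is fact~(ii): closedness of $\mathcal{F}^q$ really relies on the control set $U$ (and the state spaces $X_{k+1}$) being closed --- an assumption implicit in the DTDG setup --- so that the graph of the iterated dynamics is closed; without it a minimizing-sequence limit could escape $\mathcal{F}^q$. Everything else is the routine generalized Weierstrass theorem, with coercivity of $\pi+h$ supplying exactly the boundedness of the minimizing sequence that compactness provided in Corollary~\ref{Coro_with_compactness}.
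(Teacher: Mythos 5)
Your proof is correct and follows exactly the route the paper intends: the paper states Corollary~\ref{Cor:quasi_coercive} without proof, as the coercive analogue of Corollary~\ref{Coro_with_compactness} (which the authors dispatch with the Weierstrass theorem), and your minimizing-sequence argument establishing that $P_q$ attains its minimum, followed by an appeal to Theorem~\ref{minNE_quasi}, is precisely that argument made explicit. Your observation that closedness of $U$ and the $X_{k+1}$ is needed for $\mathcal{F}^q$ to be closed is a fair and worthwhile remark, since the paper leaves that hypothesis implicit once compactness is dropped.
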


Since the state conjecture formulation is equivalent to the original formulation, an equilibrium of a DTDG in this new formulation with cost function of players admitting a quasi-potential function is indeed an equilibrium in the original formulation. 
\subsection{Stage-additive quasi-potential DTDG}
Now we consider the case of stage-additive DTDG as defined in Definition \ref{def_stage_additive}.
For such games the stage-wise cost functions $g^1_k,g^2_k,\dots,g^N_k$ admits a quasi-potential function if the following conditions are satisfied for each $k \in \mathcal{K}$.

\begin{enumerate}
    \item There exist functions $\Psi^1_k,\Psi^2_k,\dots,\Psi^N_k$ and $h_k$ such that $\forall i \in \mathcal{N}, g^i_k$ is given by, 
    \begin{align*}
        g^i_k(x^i_{k+1},u^1_k,\dots,u^N_k,x^i_k) \equiv  \Psi^i_k(u^1_k,\dots,u^N_k)+h_k(x^i_k,\\u^1_k,\dots,u^N_k,x^i_{k+1}), \forall u^i_k \in U^i_k, \forall x^i_{k+1}\in X^{i}_{k+1},\forall k \in \mathcal{K},
    \end{align*}  
    \item and there exist functions $\pi_k, \forall k \in \mathcal{K}$ such that for all $i \in \mathcal{N}$ and for all $ u^{-i}_k \in U^{-i}_k$,
    \begin{multline*}
        \Psi^i_k(\tilde{u}^i_k,u^{-i}_k) - \Psi^i_k(\hat{u}^i_k,u^{-i}_k) = \pi_k(\tilde{u}^i_k,u^{-i}_k) -\\ \pi_k (\hat{u}^i_k,u^{-i}_k), \forall  \tilde{u}^i_k, \hat{u}^i_k \in U^i_k.
    \end{multline*}
\end{enumerate}
The function $\pi_k + h_k$ is a \textit{stage-wise quasi-potential function} for the \textit{stage-wise quasi-potential DTDG}. The following proposition shows that if the stage-wise cost function of players admit a stage-wise quasi-potential function, then the overall cost functions also admit a quasi-potential function. 
\begin{proposition}
    Consider a stage-additive DTDG defined in Definition \ref{def_stage_additive}. If the stage-wise cost functions $g^i_k, i \in \mathcal{N}$ admit a stage-wise quasi-potential function, then the stage-additive cost functions $J^1,J^2,\dots,J^N$ also admit a quasi-potential function.
\end{proposition}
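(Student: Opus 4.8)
The plan is to build the overall quasi-potential data $(\Phi_1,\dots,\Phi_N,h,\pi)$ of Definition \ref{quasi_def} directly out of the stage-wise data $(\Psi^1_k,\dots,\Psi^N_k,h_k,\pi_k)$ by summing over the stages $k$. Concretely, I would set
\[
\Phi_i(u,x_1) := \sum_{k=1}^{K} \Psi^i_k(u^1_k,\dots,u^N_k), \qquad
h(u,x_1,x^i) := \sum_{k=1}^{K} h_k(x^i_k,u^1_k,\dots,u^N_k,x^i_{k+1}),
\]
with the convention $x^i_1 = x_1$ for every $i \in \mathcal{N}$ (the common initial state), and
\[
\pi(u,x_1) := \sum_{k=1}^{K} \pi_k(u^1_k,\dots,u^N_k).
\]

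First I would verify item 1 of Definition \ref{quasi_def}. Starting from the stage-additive form $J^i = \sum_{k=1}^{K} g^i_k(x^i_{k+1},u^1_k,\dots,u^N_k,x^i_k)$ and substituting the hypothesised splitting $g^i_k = \Psi^i_k + h_k$, linearity of the finite sum gives $J^i(u,x_1,x^i) = \Phi_i(u,x_1) + h(u,x_1,x^i)$ for all $u \in U$ and all $x^i \in X^i$. Note that $h$ does not depend on the index $i$ — it is literally the same function of its arguments for every player — while the argument $x^i$ is player-dependent, which is exactly the structure required in Definition \ref{quasi_def}.

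Next I would check item 2. Fix $i \in \mathcal{N}$ and $u^{-i} \in U^{-i}$, and take any $\tilde{u}^i,\hat{u}^i \in U^i$. Since each $\Psi^i_k$ depends only on the stage-$k$ actions, replacing $\tilde{u}^i$ by $\hat{u}^i$ affects only the component $u^i_k$ inside the $k$-th summand, so
\[
\Phi_i(\tilde{u}^i;u^{-i},x_1) - \Phi_i(\hat{u}^{i};u^{-i},x_1) = \sum_{k=1}^{K} \big( \Psi^i_k(\tilde{u}^i_k,u^{-i}_k) - \Psi^i_k(\hat{u}^i_k,u^{-i}_k) \big).
\]
Applying the stage-wise identity term by term — legitimate because it holds for arbitrary $\tilde{u}^i_k,\hat{u}^i_k \in U^i_k$ and arbitrary $u^{-i}_k \in U^{-i}_k$ — the right-hand side equals $\sum_{k=1}^{K} \big( \pi_k(\tilde{u}^i_k,u^{-i}_k) - \pi_k(\hat{u}^i_k,u^{-i}_k) \big) = \pi(\tilde{u}^i;u^{-i},x_1) - \pi(\hat{u}^i;u^{-i},x_1)$. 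This establishes item 2, so the cost functions $J^1,\dots,J^N$ admit a quasi-potential function, namely $\pi + h = \sum_{k=1}^{K}(\pi_k + h_k)$, the sum of the stage-wise quasi-potential functions.

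There is no substantial obstacle here: the argument is essentially "sum the stage decompositions and invoke linearity of $\sum_k$." The only points I would flag explicitly are the bookkeeping of which variables each piece depends on ($\Psi^i_k$ and $\pi_k$ on stage-$k$ actions only, $h_k$ on the adjacent states $x^i_k,x^i_{k+1}$ as well), and the identification $x^i_1 = x_1$, which is needed so that the aggregated $h$ is a well-defined function of $(u,x_1,x^i)$ with the domain declared in Definition \ref{quasi_def}.
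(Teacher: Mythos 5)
Your proposal is correct and follows essentially the same route as the paper's proof: both aggregate the stage-wise data by summing over $k$, taking $\Phi_i=\sum_k\Psi^i_k$, $h=\sum_k h_k$, and $\pi=\sum_k\pi_k$, with the overall quasi-potential function $\sum_k(\pi_k+h_k)$. Your write-up is merely more explicit than the paper's in verifying item 2 term by term and in noting the identification $x^i_1=x_1$, but there is no substantive difference in approach.
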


\begin{proof}
%
    The stage-additive cost function of player $i$ in a stage-wise quasi-potential DTDG is given by,
    \begin{multline*}
        J^i(u^1,u^2,\dots,u^N,x_1,x^i_2,\dots,x^i_{K+1}) = \\ \sum\limits_{k=1}^{K} \Psi^i_k(u^1_k,\dots,u^N_k)+ \sum\limits_{k=1}^{K}h_k(x^i_k,u^1_k,\dots,u^N_k,x^i_{k+1}),
    \end{multline*}
    where $u^i_k \in U^i_k,u^i\in U^i,x^i_{k+1}\in X^i_{k+1}, \forall i \in \mathcal{N}$.
The first sum admits a potential function given by $\sum\limits_{k=1}^{K} \pi_k$. The second sum is independent of players though the argument depends on. Hence, the stage-additive cost function of players admit a quasi-potential function.
\end{proof}

For a stage-wise quasi-potential DTDG with stage-wise quasi-potential function $\pi_k + h_k$, consider the following optimisation problem denoted by $P_{q}^{s}$ with $w_1=x_1$.
$$
\problemsmall{$P_{q}^{s}$}
{({u},w_2,\dots,w_{K+1})}
{\sum\limits_{k=1}^{K}\Bigl[\pi_k({u^1_k, \dots, u^N_k})+h_k({u},w_k,w_{k+1})\Bigr]}
{({u},w_2,w_3,\dots,w_{K+1}) \in \mathcal{F}^{q}. }
$$
It can be seen from Theorem \ref{minNE_quasi} that the minimizer of the problem $P_{q}^{s}$ is an open-loop Nash equilibrium of the stage-wise quasi-potential DTDG with $x^i_{k+1}=w_{k+1}, \forall i\in \mathcal{N}, \forall k\in \mathcal{K}$. The problem $P_{q}^{s}$ has the structure of a  standard control problem for which a vast theory is available which can be utilized to determine the equilibria.

It can be seen that the linear-quadratic DTDGs under certain assumptions  comes under the class of stage-wise quasi-potential DTDGs. We first provide the definition of linear-quadratic DTDGs.
\begin{definition}\label{def_LQ}
    An $N$-person DTDG is of linear-quadratic type if $U^i_k= \mathbb{R}^m_i(i\in \mathcal{N}, k \in \mathcal{K})$, and $$
    f_k(x^i_k,u^1_k,\dots,u^N_k)=A_kx^i_k+\sum\limits_{j\in\mathcal{N}}b^j_ku^j_k,$$ 
     \begin{align*}
         g^i_k(x^i_{k+1},u^1_k,\dots,u^N_k,x^i_k)=  \frac{1}{2}\Bigl( x^{i'}_{k+1}Q^i_{k+1}&x^i_{k+1}+ \\& \sum\limits_{j \in \mathcal{N}}u^{j'}_kR^{ij}_ku^j_k\Bigr),
    \end{align*}
    where $A_k,B_k,Q^i_k,R^{ij}_k$ are matrices of appropriate dimensions, $Q^i_{k+1}$ is symmetric, $R^{ii}_k \succ 0$ and  $u^i_k \in U^i_k,x^i_{k+1}\in X^{i}_{k+1},\forall i \in \mathcal{N},\forall k \in \mathcal{K}$.
    \end{definition}

\begin{lemma}\label{Lemma:quasi_LQ}
    For a linear-quadratic DTDG with $Q_{k+1}\triangleq Q^i_{k+1}=Q^j_{k+1}, \forall i,j \in \mathcal{N}, \forall k \in \mathcal{K}$, the stage-wise cost function of players, $g^i_k$ admit a quasi-potential function.
    \end{lemma}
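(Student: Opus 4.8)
The plan is simply to verify, by exhibiting the required functions explicitly, that the linear-quadratic stage costs of Definition~\ref{def_LQ} satisfy the two conditions in the definition of a stage-wise quasi-potential DTDG. The guiding idea is that the common weighting matrix $Q_{k+1}$ lets the quadratic state term be absorbed into a player-independent function, while all the control-cost terms (both the diagonal blocks $R^{ii}_k$ and the cross blocks $R^{ij}_k$, $j\neq i$) are placed in the player-dependent part $\Psi^i_k$, whose unilateral deviations will turn out to be governed by a single potential.

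Concretely, for each $k\in\mathcal{K}$ and $i\in\mathcal{N}$ I would set
\[
\Psi^i_k(u^1_k,\dots,u^N_k)=\frac{1}{2}\sum_{j\in\mathcal{N}}u^{j'}_kR^{ij}_ku^j_k,\qquad
h_k(x_k,u^1_k,\dots,u^N_k,x_{k+1})=\frac{1}{2}\,x'_{k+1}Q_{k+1}x_{k+1}.
\]
Condition~1 is then immediate from the form of $g^i_k$ in Definition~\ref{def_LQ}: substituting $x^i_{k+1}$ and $x^i_k$ as the state arguments gives $g^i_k(x^i_{k+1},u^1_k,\dots,u^N_k,x^i_k)=\Psi^i_k(u^1_k,\dots,u^N_k)+h_k(x^i_k,u^1_k,\dots,u^N_k,x^i_{k+1})$, where it is worth noting that in the linear-quadratic case neither $g^i_k$ nor $h_k$ actually depends on $x^i_k$. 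The essential point is that $h_k$ is the \emph{same} function for all players; this is exactly where the hypothesis $Q^i_{k+1}=Q^j_{k+1}=:Q_{k+1}$ is used, since otherwise the term $\frac{1}{2}x^{i'}_{k+1}Q^i_{k+1}x^i_{k+1}$ would still carry the index $i$ and could not be folded into a player-independent $h_k$.

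For Condition~2, define $\pi_k(u^1_k,\dots,u^N_k)=\frac{1}{2}\sum_{j\in\mathcal{N}}u^{j'}_kR^{jj}_ku^j_k$, which does not depend on the player index $i$. Fixing $i\in\mathcal{N}$ and $u^{-i}_k\in U^{-i}_k$ and varying only $u^i_k$, the only summand of $\Psi^i_k$ that changes is $\frac{1}{2}u^{i'}_kR^{ii}_ku^i_k$ (the cross terms $R^{ij}_k$, $j\neq i$, multiply $u^j_k$ and are untouched), so $\Psi^i_k(\tilde u^i_k,u^{-i}_k)-\Psi^i_k(\hat u^i_k,u^{-i}_k)=\frac{1}{2}(\tilde u^{i'}_kR^{ii}_k\tilde u^i_k-\hat u^{i'}_kR^{ii}_k\hat u^i_k)$; by the identical reasoning, the only $u^i_k$-dependent term of $\pi_k$ is $\frac{1}{2}u^{i'}_kR^{ii}_ku^i_k$, so $\pi_k(\tilde u^i_k,u^{-i}_k)-\pi_k(\hat u^i_k,u^{-i}_k)$ equals the same quantity, which establishes Condition~2 with the common potential $\pi_k$. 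Hence $\pi_k+h_k$ is a stage-wise quasi-potential function for $\{g^1_k,\dots,g^N_k\}$. There is no genuine analytical obstacle here — the lemma is a structural observation — so the only thing to watch is the bookkeeping that keeps $h_k$ and $\pi_k$ free of the index $i$; combined with the preceding proposition this also shows that the full stage-additive costs $J^1,\dots,J^N$ of such games admit the quasi-potential function $\sum_{k=1}^{K}(\pi_k+h_k)$, so Theorem~\ref{minNE_quasi} and its corollaries apply.
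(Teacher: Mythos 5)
Your proposal is correct and follows essentially the same route as the paper: the same decomposition $\Psi^i_k=\frac{1}{2}\sum_{j}u^{j'}_kR^{ij}_ku^j_k$, the same player-independent $h_k=\frac{1}{2}x'_{k+1}Q_{k+1}x_{k+1}$ (which is exactly where the hypothesis $Q^i_{k+1}=Q^j_{k+1}$ enters), and the same potential $\pi_k=\frac{1}{2}\sum_{j}u^{j'}_kR^{jj}_ku^j_k$. The only difference is that you spell out the verification of the potential-difference identity that the paper dismisses as easily verified, which is a harmless elaboration.
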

    
    \begin{proof}
For a linear-quadratic DTDG, the stage-wise cost function is given by, 
\begin{align*}
     g^i_k(x^i_{k+1},u^1_k,\dots,u^N_k,x^i_k)= \frac{1}{2}\Bigl( x^{i'}_{k+1}Q_{k+1}&x^i_{k+1}+ \\& \sum\limits_{j \in \mathcal{N}}u^{j'}_kR^{ij}_ku^j_k\Bigr).
\end{align*}
It can be easily verified that the second summation term is a quadratic function which admits a potential function with stage-wise potential function $\pi_k (u^1_k,\dots,u^N_k) = \frac{1}{2} \sum\limits_{i=1}^{N} u^{i'}_k R^{ii}_{k} u^{i}_k$. Hence, in the definition of quasi-potential DTDG, 
$\Psi^i_k = \frac{1}{2} \sum\limits_{j \in \mathcal{N}}u^{j'}_kR^{ij}_ku^j_k $. Also, the first function is identical for all players and the argument depends on the state conjectured by players.
Hence, $h_k = \frac{1}{2} x^{i'}_{k+1}Q^i_{k+1}x^i_{k+1}$, which implies that $g^i_k$ admits a quasi-potential function.    
        \end{proof}
        The fact that the stage-wise cost function of the linear-quadratic DTDGs under certain assumptions admit a stage-wise quasi-potential function can be utilized to provide the existence of equilibria in such cases which are given by the following corollary. 
        
        \begin{corollary}
        Consider a linear-quadratic DTDG as defined in \ref{def_LQ}. Suppose the assumptions of Lemma \ref{Lemma:quasi_LQ} hold with $Q_{k+1} \succ 0, \forall k \in \mathcal{K}$. Then there exists an equilibrium for the linear-quadratic DTDG which is given by the minimizer of the problem $P_q^s$. 
            \end{corollary}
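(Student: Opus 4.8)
My plan is to chain together the results already in hand: Lemma~\ref{Lemma:quasi_LQ} identifies a stage-wise quasi-potential function, the proposition on stage-additive quasi-potential DTDGs lifts it to a quasi-potential function for the aggregate costs, Theorem~\ref{minNE_quasi} says that a global minimiser of $P_q^s$ induces an open-loop Nash equilibrium, and finally Corollary~\ref{Cor:quasi_coercive} supplies such a minimiser once continuity and coercivity are checked. Concretely, by Lemma~\ref{Lemma:quasi_LQ} and the standing hypothesis $Q_{k+1}\triangleq Q^i_{k+1}=Q^j_{k+1}$, each $g^i_k$ admits the stage-wise quasi-potential $\pi_k+h_k$ with $\pi_k(u^1_k,\dots,u^N_k)=\tfrac12\sum_{j\in\mathcal N}u^{j'}_kR^{jj}_ku^j_k$ and $h_k(x^i_k,u^1_k,\dots,u^N_k,x^i_{k+1})=\tfrac12 x^{i'}_{k+1}Q_{k+1}x^i_{k+1}$. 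By the stage-additive aggregation proposition the costs $J^1,\dots,J^N$ then admit the quasi-potential $\pi+h=\sum_{k=1}^K\pi_k+\sum_{k=1}^K h_k$, whose value on $\mathcal F^q$ is exactly the objective of $P_q^s$. So by Theorem~\ref{minNE_quasi} it remains only to show that $P_q^s$ attains its infimum.

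For that I would verify the hypotheses of Corollary~\ref{Cor:quasi_coercive}. The maps $f_k$ are affine in $(x^i_k,u^1_k,\dots,u^N_k)$, hence continuous; the objective of $P_q^s$ is a quadratic, hence continuous; and $\mathcal F^q\neq\emptyset$ since any $u\in U$ propagated through the linear recursion $w_2=A_1x_1+\sum_j b^j_1u^j_1$, $w_{k+1}=A_kw_k+\sum_j b^j_ku^j_k$ produces a feasible point. The substantive point is coercivity. The key observation is that on $\mathcal F^q$ the block $(w_2,\dots,w_{K+1})$ is an affine function of $u$ (just unroll the recursion: each $w_{k+1}$ is a fixed affine expression in $x_1$ and the $u^j_\ell$ with $\ell\le k$), so $w$ is bounded whenever $u$ is; hence along any sequence in $\mathcal F^q$ with $\|(u,w_2,\dots,w_{K+1})\|\to\infty$ one necessarily has $\|u\|\to\infty$.

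Now, since $R^{jj}_k\succ0$ for every $j\in\mathcal N$, $k\in\mathcal K$ and there are only finitely many such matrices, there is a constant $r>0$ with $\sum_{k=1}^K\pi_k(u^1_k,\dots,u^N_k)=\tfrac12\sum_{k,j}u^{j'}_kR^{jj}_ku^j_k\ge r\|u\|^2$ for all $u\in U$, while $Q_{k+1}\succ0$ (indeed $Q_{k+1}\succeq0$ would suffice) gives $\sum_{k=1}^K h_k=\tfrac12\sum_k w'_{k+1}Q_{k+1}w_{k+1}\ge0$ on $\mathcal F^q$. Hence the objective of $P_q^s$ is bounded below by $r\|u\|^2$, which tends to $\infty$ as $\|u\|\to\infty$, and therefore — by the previous paragraph — whenever $\|(u,w_2,\dots,w_{K+1})\|\to\infty$ along $\mathcal F^q$; so the quasi-potential function $\pi+h$ is coercive. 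Corollary~\ref{Cor:quasi_coercive} then yields an open-loop Nash equilibrium of the linear-quadratic DTDG, and Theorem~\ref{minNE_quasi} shows this equilibrium is the one realised by a global minimiser of $P_q^s$, which is the claim.

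The one genuinely delicate point is the coercivity check, and in particular the role played by $Q_{k+1}\succeq0$: because $w$ depends affinely and unboundedly on $u$ along $\mathcal F^q$, an indefinite $Q_{k+1}$ could in principle contribute a negative quadratic in $u$ strong enough to cancel the coercive term $r\|u\|^2$ coming from the control penalties. Positive semidefiniteness of $Q_{k+1}$ is precisely what prevents this; beyond it, the argument is routine bookkeeping (finite-dimensionality of $U$, continuity of quadratics, and the affine propagation of the state).
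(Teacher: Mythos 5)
Your proof is correct and follows essentially the same route as the paper: identify the quasi-potential via Lemma \ref{Lemma:quasi_LQ} and the stage-additive aggregation, verify coercivity and continuity, and conclude through Corollary \ref{Cor:quasi_coercive} and Theorem \ref{minNE_quasi}. The only difference is that you actually spell out the coercivity check (the paper merely asserts it from $Q_{k+1}\succ 0$ and $R^{ii}_k\succ 0$), and your observation that the affine propagation of $w$ along $\mathcal{F}^q$ lets one weaken $Q_{k+1}\succ 0$ to $Q_{k+1}\succeq 0$ is a small but genuine sharpening.
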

        \begin{proof}
        Since $Q_{k+1} \succ 0, \forall k \in \mathcal{K}$ and $R^{ii}_k \succ 0, \forall i \in \mathcal{N}, \forall k \in \mathcal{K}$, the quasi-potential function $\sum\limits_{k\in\mathcal{K}}(\pi_k + h_k$) as defined in the proof of Lemma \ref{Lemma:quasi_LQ} for the linear-quadratic DTDG is coercive. Then the result follows directly from Corollary \ref{Cor:quasi_coercive}.
        \end{proof}
        Thus, using the state conjecture formulation, the existence of open-loop Nash equilibria can be guaranteed for games with cost function of players admitting quasi-potential function which also includes a class linear-quadratic DTDGs. In the next section, we introduce some additional constraints on the state conjecture formulation which can provide new existence results for DTDGs.

\section{DTDG with consistent conjectures}\label{sec_modified_game}
In the state conjecture formulation of DTDGs, each player conjectures the state of the game independently. In this section, we modify the DTDG with an additional constraint of \textit{consistency of the conjectured states}. This constraint requires that each player conjecture the state of the game consistently with other players. We will refer to this game as a \textit{DTDG with consistent conjectures}. Even though the DTDG with consistent conjectures is different from the original DTDG, an equilibrium of the original game is also an equilibrium of the game with consistent conjectures. The DTDG with consistent conjectures has a special structure called \textit{shared constraint} structure which the original game does not have. We utilize this structure for giving conditions for the existence of equilibria for DTDGs when the cost functions of players admit a potential function. 
In this section, we also discuss conditions under which an equilibrium of the DTDG with consistent conjectures is an $\epsilon$-Nash equilibrium of the original game.

A player's problem in the game with consistent conjectures is given in the following section.
\subsubsection{Player $i$'s problem in the DTDG with consistent conjectures}
Let $i\in \mathcal{N}$ be an arbitrary player in a DTDG with consistent conjectures.
Given $u^{-i} = (u^{-i}_1,\dots,u^{-i}_K) \in U^{-i}$, the actions of other players, $x^{-i} = (x^{-i}_2,\dots,x^{-i}_{K+1})\in X^{-i}$, the state conjectures of other players and $x_1$, the initial state of the game, player $i$'s problem in this game is denoted by $P^{'}_{i}(u^{-i},x^{-i};x_1$) which is given by the following.
$$
\problemsmall{$P^{'}_{i}(u^{-i},x^{-i};x_1$)}
{u^i,x^i}
{J^i(u^i,x^i;u^{-i},x_1)}
{(u^i,x^i) \in \Omega^{'}_i (u^{-i},x^{-i};x_1),}
$$
where,
\begin{align*}
{\Omega^{'}_{i}}({u^{-i},x^{-i};x_1})= \{(\hat{u}^{i},\hat{x}^{i})|\hat{u}^{i}_{k}\in U^{i}_{k}, \forall k \in \mathcal{K}, \hat{x}^i_{k+1}\in X^{i}_{k+1},\\ \forall k \in \mathcal{K}, \hat{x}^{i}_{2} = f_{1}(x_{1},\hat{u}^{i}_{1},u^{-i}_{1}), \hat{x}^{i}_{k+1} = f_{k}(\hat{x}^{i}_{k},\hat{u}^{i}_{k}, u^{-i}_{k}),\\\forall k \in \mathcal{K} \backslash \{1\}, \hat{x}^{i}_{k+1}=x^{j}_{k+1}, \forall j \in \mathcal{N}\backslash\{i\}, \forall k \in \mathcal{K}\}.
\end{align*}
The Cartesian product of the feasible set $ \Omega^{'}_{i} (u^{-i},x^{-i};x_1) $ over $i \in \mathcal{N}$ is given by the set value map denoted by $\Omega^{'}(u,x)$ which is given by,
$
{\Omega}^{'}(u,x)\triangleq \prod\limits_{i \in \mathcal{N}} {\Omega^{'}_{i}}(u^{-i},x^{-i};x_1),
$ where $u \in U$ and $x \in X$. The fixed points of the set value map $\Omega^{'}(u,x)$ is denoted as $\mathcal{F'}$ and is given by, $ \mathcal{F}' \triangleq \{(u,x)| (u,x)\in \Omega'(u,x) \}.$ It can be given by,
\begin{multline} \label{eqn:fixed_points_dash}
	\mathcal{F}^{'}=\{(u,x)| u\in U, x \in X, x^i_2=f_1(x_1,u^1_1,\dots,u^N_1), \forall i \in \mathcal{N},\\ x^i_{k+1}=f_k(x^i_k,u^1_k,\dots,u^N_k), \forall i \in \mathcal{N}, \forall k \in \mathcal{K}, \\
	x^i_{k+1}=x^j_{k+1}, \forall i \in \mathcal{N}, \forall j \in \mathcal{N}\backslash \{i\}, \forall k \in \mathcal{K} \}. 
\end{multline}
Notice that since the consistency condition is redundant in the set $\mathcal{F}^{'}$, the set $\mathcal{F}^{'}$ is same as $\mathcal{F}$. That is, $$\mathcal{F}^{'}=\mathcal{F}.$$ The definition of open-loop Nash equilibrium for a DTDG with consistent conjectures is given as follows.

\begin{definition} \label{def:consistent_conjecture_NE}
Consider a DTDG with consistent conjectures with the initial state $x_1$. A point $\{(u^{i*},x^{i*}); i \in \mathcal{N}\} \in \mathcal{F'}$ is said to be an open-loop Nash equilibrium of the DTDG with consistent conjectures if $\forall i \in \mathcal{N}$, given $(u^{-i*}, x^{-i*}) \in U^{-i} \times X^{-i}$, the actions and state conjectures at the equilibrium,
\begin{align*}
J^{i}(u^{i*},x^{i*};u^{-i*},x_1) \leq J^{i}&({u}^i,{x}^i;u^{-i*},x_1),\\&
\forall ({u}^i,{x}^i) \in \Omega^{'}_i (u^{-i*},x^{-i*};x_1).
\end{align*}
\end{definition}

\subsubsection{Relation to equilibria of the original game}
The relation of equilibria of the original game to equilibria of the game with consistent conjectures is given by the following proposition.
\begin{proposition}\label{relation_classical}
Consider a DTDG in state conjecture formulation. If $ (u^{1*},\dots,u^{N*},x^{1*},\dots,x^{N*}) \in \mathcal{F}$ is an open-loop Nash equilibrium of the original DTDG, then $(u^{1*},\dots,u^{N*},x^{1*},\dots,x^{N*}) \in \mathcal{F'}$ is also an open-loop Nash equilibrium of the DTDG with consistent conjectures.
\end{proposition}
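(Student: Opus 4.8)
The plan is to exploit two facts already recorded in the excerpt: first, that $\mathcal{F}^{'}=\mathcal{F}$, so the candidate point $(u^{*},x^{*})$ automatically lies in the feasible fixed-point set of the modified game; and second, that for each $i$ the feasible set $\Omega^{'}_i(u^{-i*},x^{-i*};x_1)$ of player $i$ in the DTDG with consistent conjectures is obtained from $\Omega_i(u^{-i*};x_1)$ merely by appending the consistency constraints $\hat{x}^i_{k+1}=x^{j*}_{k+1}$ for $j\neq i$. Consequently $\Omega^{'}_i(u^{-i*},x^{-i*};x_1)\subseteq \Omega_i(u^{-i*};x_1)$ for every $i\in\mathcal{N}$. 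The whole argument is then a monotonicity-in-the-feasible-set argument and needs no hypotheses on the cost functions or the dynamics.

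First I would verify membership. Since $(u^{*},x^{*})\in\mathcal{F}$, the dynamic equations hold and, as noted right after the definition of $\mathcal{F}$, the conjectured trajectories of the players coincide, i.e. $x^{i*}=x^{j*}$ for all $i,j\in\mathcal{N}$. Hence $(u^{i*},x^{i*})$ satisfies the consistency constraints against $x^{-i*}$, so $(u^{i*},x^{i*})\in\Omega^{'}_i(u^{-i*},x^{-i*};x_1)$; together with $\mathcal{F}^{'}=\mathcal{F}$ this gives $(u^{*},x^{*})\in\mathcal{F}^{'}$, as required by Definition \ref{def:consistent_conjecture_NE}.

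Next I would restrict the equilibrium inequality. Because $(u^{*},x^{*})$ is an open-loop Nash equilibrium of the original DTDG, for each $i\in\mathcal{N}$ we have $J^{i}(u^{i*},x^{i*};u^{-i*},x_1)\le J^{i}(u^{i},x^{i};u^{-i*},x_1)$ for all $(u^{i},x^{i})\in\Omega_i(u^{-i*};x_1)$. Since $\Omega^{'}_i(u^{-i*},x^{-i*};x_1)\subseteq\Omega_i(u^{-i*};x_1)$, the same inequality holds a fortiori for every $(u^{i},x^{i})\in\Omega^{'}_i(u^{-i*},x^{-i*};x_1)$, which is precisely the defining inequality of an open-loop Nash equilibrium of the DTDG with consistent conjectures.

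I do not anticipate any real obstacle. The only point requiring care is the bookkeeping that the added consistency constraint strictly shrinks each player's feasible set while keeping the equilibrium point feasible — and this is guaranteed exactly because, at a fixed point of $\Omega$, all the conjectured state trajectories already agree, making the consistency constraint redundant there.
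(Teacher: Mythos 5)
Your proposal is correct and follows essentially the same route as the paper's own proof: feasibility via $\mathcal{F}=\mathcal{F}^{'}$, the inclusion $\Omega^{'}_i(u^{-i*},x^{-i*};x_1)\subseteq\Omega_i(u^{-i*};x_1)$, and restriction of the equilibrium inequality to the smaller feasible set. Your version is slightly more explicit about why the equilibrium point itself remains feasible for each player's constrained problem, but the argument is the same.
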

\begin{proof}
Since $\mathcal{F}= \mathcal{F}^{'}$, an equilibrium of the original DTDG is also feasible for the DTDG with consistent conjectures. For player $i \in \mathcal{N}$, let $(u^{-i*},x^{-i*}) \in U^{-i} \times X^{-i}$ be the actions and state conjectures of other players at the equilibrium of the original game. Since the set $\Omega_i^{'}(u^{-i*},x^{-i*};x_1)$ has additional constraints than $\Omega_i(u^{-i*};x_1)$, for all $i \in \mathcal{N}$, $\Omega_i^{'}(u^{-i*},x^{-i*};x_1) \subseteq \Omega_i(u^{-i*};x_1)$. Hence  $(u^{1*},\dots,u^{N*},x^{1*},\dots,x^{N*}) \in \mathcal{F'}$ is also an open-loop Nash equilibrium of the DTDG with consistent conjectures.
\end{proof}
Thus, an equilibrium of the original game is also an equilibrium of the DTDG with consistent conjectures. The reverse relation will be discussed in Section \ref{subsec:epsilon_NE}. Next we show the shared constraint structure of the mapping $\Omega^{'}$ which is vital for the existence result.
\subsection{Shared constraint structure of the DTDG with consistent conjectures}
The motive of modifying the game with consistent conjectures is that it gives a shared constraint structure to the game which can be utilized for deriving an existence result. The relation of the consistent conjectures and the shared constraint structure applied to multi-leader multi-follower games is found in \cite{kulkarni2013consistency}. The set value map $\Omega^{'}(u,x)$ is a shared constraint mapping if there exists a set $\mathcal{S}$ such that $\forall i \in \mathcal{N}$,
\begin{equation*}
({u}^{i}, {x}^{i}) \in \Omega^{'}_{i}({u^{-i}, x^{-i}; x_1}) \Longleftrightarrow (u,x)\in \mathcal{S}.
\end{equation*}
Notice that the set $\mathcal{S}$ is independent of $i$.
The concept of shared constraint structure was studied in 1965 by Rosen \cite{rosen65existence} and is used in some recent papers (e.g., \cite{kulkarni09refinement},\cite{facchinei07ongeneralized},\cite{kulkarni12revisiting}). The following lemma shows that for a DTDG with consistent conjectures, the set value map $\Omega^{'}(u,x)$ has a shared constraint structure.

\begin{lemma}
\label{prop:shared_constraint}
Consider a DTDG with consistent conjectures. Let $\Omega^{'}(u,x)$ be the Cartesian product of the constraint set of problem ${P}^{'}_i (u^{-i},x^{-i};x_1$). Then $\Omega^{'}(u,x)$ is a shared constraint mapping with shared constraint set given by $\mathcal{F}^{'} \equiv ( U \times \mathcal{A}) \cap \mathcal{H}$, where
\begin{align*}
\mathcal{A} \triangleq \{({x}^{i},{x}^{-i})| ({x}^{i},{x}^{-i}) \in X, {x}^{i}_{k+1}=x^{j}_{k+1},\forall j \in \mathcal{N}\backslash\{i\}, \\\forall k \in \mathcal{K}\},
\\\text{and}
\mathcal{H} \triangleq \{({u}^{i},{u}^{-i},{x}^{i},{x}^{-i})| ({u}^{i},{u}^{-i})\in U, ({x}^{i},{x}^{-i})\in X, \\{x}^{i}_{2}= f_1(x_1,{u}^{i}_{1},{u}^{-i}_{1}),{x}^{i}_{k+1}= f_k({x}^{i}_k,{u}^{i}_{k},{u}^{-i}_{k}), \forall k \in \mathcal{K}\backslash \{1\}\}.
\end{align*}
\end{lemma}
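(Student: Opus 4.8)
The plan is to unwind the definition of a shared-constraint mapping and verify the two-way implication directly, exhibiting $\mathcal{S} = \mathcal{F}'$ (equivalently $(U\times\mathcal{A})\cap\mathcal{H}$) as the shared set. First I would record the easy observation that $(U\times\mathcal{A})\cap\mathcal{H}$ really does equal $\mathcal{F}'$ as written in~\eqref{eqn:fixed_points_dash}: the set $\mathcal{H}$ enforces $u\in U$, $x\in X$ and the state recursions $x^i_2 = f_1(x_1,u^i_1,u^{-i}_1)$, $x^i_{k+1}=f_k(x^i_k,u^i_k,u^{-i}_k)$ for one player's trajectory, while intersecting with $U\times\mathcal{A}$ adds the consistency constraints $x^i_{k+1}=x^j_{k+1}$ for all $i,j,k$; combined, these are precisely the conditions defining $\mathcal{F}'$. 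So it suffices to show that $\Omega'$ is a shared-constraint mapping with shared set $\mathcal{F}'$.

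The core of the argument is then the chain of equivalences: for each fixed $i\in\mathcal{N}$ and each $(u^{-i},x^{-i})$ arising as the $-i$ components of some $(u,x)$,
\[
(u^i,x^i)\in\Omega'_i(u^{-i},x^{-i};x_1)\iff (u,x)\in\mathcal{F}'.
\]
For the forward direction, membership in $\Omega'_i$ gives $u^i\in U^i$, $x^i\in X^i$, the recursion for $x^i$, and $x^i_{k+1}=x^j_{k+1}$ for all $j\neq i$ and all $k$. Because the $-i$ components $(u^{-i},x^{-i})$ were themselves drawn from a point of $X\times U$, the recursion and consistency relations for the remaining players follow automatically from the consistency constraint inside $\Omega'_i$ together with the recursion satisfied by player $i$'s conjecture — this is exactly the remark made earlier in the text that, once consistency holds, a single trajectory pins down all the others, so the seemingly ``$i$-only'' constraints already imply the full system defining $\mathcal{F}'$. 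Hence $(u,x)\in\mathcal{F}'$. For the reverse direction, if $(u,x)\in\mathcal{F}'$ then in particular $u^i\in U^i$, $x^i\in X^i$, player $i$'s recursion holds, and $x^i_{k+1}=x^j_{k+1}$ for all $j\neq i$, $k\in\mathcal{K}$ — which is verbatim the definition of $\Omega'_i(u^{-i},x^{-i};x_1)$. Since the right-hand side $\mathcal{F}'$ is manifestly independent of $i$, this establishes the shared-constraint property.

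I expect the only subtle point — and the one worth spelling out carefully rather than asserting — to be the forward implication: verifying that the constraints present in $\Omega'_i$ (which only mention player $i$'s recursion explicitly, plus consistency) actually force the recursions $x^j_{k+1}=f_k(x^j_k,u^1_k,\dots,u^N_k)$ for the \emph{other} players $j\neq i$, which are part of the definition of $\mathcal{F}'$. This is where one uses that $x^j = x^i$ for all $j$ by the consistency constraint, so that player $i$'s recursion, which holds by assumption, transports verbatim to every $j$. Everything else is bookkeeping: rewriting $(u^i,u^{-i})$ as $u$, $(x^i,x^{-i})$ as $x$, and matching the clause-by-clause descriptions of $\Omega'_i$, of $\mathcal{F}'$ in~\eqref{eqn:fixed_points_dash}, and of $(U\times\mathcal{A})\cap\mathcal{H}$.
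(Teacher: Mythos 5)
Your proof is correct and follows essentially the same route as the paper's: a direct clause-by-clause verification that $(u^i,x^i)\in\Omega^{'}_i(u^{-i},x^{-i};x_1)$ holds if and only if $(u,x)\in(U\times\mathcal{A})\cap\mathcal{H}=\mathcal{F}^{'}$, with the right-hand side independent of $i$. You are in fact more explicit than the paper on the one nontrivial step --- that the consistency constraint transports player $i$'s state recursion to every other player's trajectory, so that the $i$-indexed description of $(U\times\mathcal{A})\cap\mathcal{H}$ coincides with the full system defining $\mathcal{F}^{'}$ --- which the paper asserts without comment.
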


\begin{proof}
Let $i \in \mathcal{N}$ be an arbitrary player for the game with consistent conjectures.
By using the defined sets $\mathcal{A}$ and $\mathcal{H}$, the set ${\Omega^{'}_{i}}({u^{-i},x^{-i};x_1})$ can be written as,
$
\Omega^{'}_{i}({u^{-i},x^{-i};x_1}) = \{(\hat{u}^{i},\hat{x}^{i}) | (\hat{u}^{i},u^{-i})\in U, (\hat{x}^{i},x^{-i})\in \mathcal{A}, (\hat{u}^{i},u^{-i},\hat{x}^{i},x^{-i}) \in \mathcal{H} \}.
$ i.e., $({u}^{i},{x}^{i}) \in \Omega^{'}_{i}({u^{-i},x^{-i};x_1}) \Longleftrightarrow (u,x)\in (U\times \mathcal{A})\cap \mathcal{H}$, which is independent of $i$. Since the implication holds for each $i \in \mathcal{N}$, $ \Omega^{'}(u,x)$ is a shared constraint mapping with shared constraint set given by $(U\times \mathcal{A})\cap \mathcal{H}$. It can be seen from \eqref{eqn:fixed_points_dash} that the shared constraint set is indeed the set of fixed points of the mapping $\Omega^{'}$. Hence, $\mathcal{F}^{'} = (U\times \mathcal{A})\cap \mathcal{H}$.
\end{proof}
Thus, we have,
\begin{equation} \label{eqn:shared_constraint}
({u}^{i},{x}^{i}) \in \Omega^{'}_{i}({u^{-i},x^{-i};x_1}) \Longleftrightarrow (u,x)\in \mathcal{F}^{'}.
\end{equation}
Now we use the relation \eqref{eqn:shared_constraint} to show the existence of equilibria for a class of games called the potential DTDGs in the next section.

\subsection{Existence of equilibria for a potential DTDG with consistent conjectures}
 Potential games were well studied by many and one pivotal paper is \cite{monderer96potential} by Monderer and Shapley.
We extend this concept to DTDGs and term it as potential DTDGs which is defined as follows.

\begin{definition}
Consider a DTDG in conjecture state formulation as defined in Section \ref{sec_equivalent_formulation} with cost function of players given by $\{J^1,J^2,\dots,J^N\}$. The DTDG is said to be potential if there exist a potential function $\pi$ such that for all $i \in \mathcal{N}$ and for all ${u}^{-i} \in U^{-i}$,
\begin{align*}
J^i(\tilde{u}^{i},\tilde{x}^{i},u^{-i};x_1) - J^i (\hat{u}^{i},\hat{x}^{i},u^{-i};x_1) =  \pi(\tilde{u}^{i},\tilde{x}^{i},u^{-i};x_1) - \\\pi (\hat{u}^{i},\hat{x}^{i},u^{-i};x_1),\forall (\tilde{u}^{i},\tilde{x}^{i}),(\hat{u}^{i},\hat{x}^{i}) \in U^i \times X^i.
\end{align*}
\end{definition}
The advantage of a potential game is that the minimizer of the potential function \cite{monderer96potential} over suitably defined feasible set gives an equilibrium of the game. Conditions for the existence of open-loop Nash equilibria in a potential DTDGs is given by considering the following optimisation problem denoted by $P$.
$$
\problemsmall{$P$}
{u,x}
{\pi(u,x;x_1)}
{(u,x) \in \mathcal{F}^{'}.}
$$
Note that the objective function of problem $P$ is the potential function, $\pi$ which is to be minimized over the shared constraint set $\mathcal{F}^{'}$. The following theorem shows that a minimizer of the problem $P$ is an open-loop Nash equilibrium of the potential DTDG with consistent conjectures.
\begin{theorem} \label{th:existence}
Consider a DTDG with consistent conjectures. Let $\mathcal{F}^{'}$ be the shared constraint set of the DTDG with consistent conjectures. Suppose the cost function of players admit a potential function $\pi$, then any minimizer of problem $P$ is an equilibrium of the potential DTDG with consistent conjectures.
\end{theorem}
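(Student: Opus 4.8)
The plan is to mimic the standard argument for potential games, adapted to the shared-constraint setting established in Lemma \ref{prop:shared_constraint}. Suppose $(u^*, x^*)$ is a global minimizer of problem $P$, i.e.\ $(u^*,x^*) \in \mathcal{F}'$ and $\pi(u^*,x^*;x_1) \leq \pi(u,x;x_1)$ for all $(u,x) \in \mathcal{F}'$. I want to show that for each player $i \in \mathcal{N}$, the pair $(u^{i*}, x^{i*})$ solves $P'_i(u^{-i*}, x^{-i*};x_1)$, where $(u^{-i*},x^{-i*})$ are the components of the minimizer belonging to the other players.

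First I would fix an arbitrary $i$ and an arbitrary feasible deviation $(\tilde u^i, \tilde x^i) \in \Omega'_i(u^{-i*}, x^{-i*}; x_1)$. The crucial observation is the shared-constraint equivalence \eqref{eqn:shared_constraint}: since $(\tilde u^i, \tilde x^i) \in \Omega'_i(u^{-i*}, x^{-i*}; x_1)$, the full tuple $(\tilde u^i, u^{-i*}, \tilde x^i, x^{-i*})$ lies in $\mathcal{F}'$. Therefore it is a feasible point of problem $P$, and by global optimality of $(u^*,x^*)$ we get
\begin{equation*}
\pi(u^{i*}, x^{i*}, u^{-i*}; x_1) \leq \pi(\tilde u^i, \tilde x^i, u^{-i*}; x_1).
\end{equation*}
Now I invoke the defining property of the potential function: $J^i(\tilde u^i, \tilde x^i, u^{-i*}; x_1) - J^i(u^{i*}, x^{i*}, u^{-i*}; x_1) = \pi(\tilde u^i, \tilde x^i, u^{-i*}; x_1) - \pi(u^{i*}, x^{i*}, u^{-i*}; x_1) \geq 0$. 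Since $i$ and the deviation $(\tilde u^i, \tilde x^i)$ were arbitrary, this is exactly the equilibrium condition in Definition \ref{def:consistent_conjecture_NE}, so $(u^*,x^*)$ is an open-loop Nash equilibrium of the DTDG with consistent conjectures.

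The argument is essentially routine once the two ingredients are lined up, so I do not anticipate a genuine obstacle; the one subtle point worth stating carefully is that the potential-function identity is only assumed for deviations of a single player with $u^{-i}$ held fixed, which is precisely the situation here (the other players' actions and conjectures stay at their starred values), so the identity applies verbatim. I would also note that nothing in the proof requires the minimizer to be global in a stronger sense than over $\mathcal{F}'$, and that the non-emptiness of $\mathcal{F}'$ (needed for a minimizer to exist) is a separate issue — the theorem only asserts that \emph{if} a minimizer exists then it is an equilibrium, so I would not address existence of the minimizer here beyond perhaps a remark pointing to compactness/coercivity conditions analogous to Corollaries \ref{Coro_with_compactness} and \ref{Cor:quasi_coercive}.
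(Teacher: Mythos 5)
Your proof is correct and takes essentially the same route as the paper's: both hinge on the shared-constraint equivalence \eqref{eqn:shared_constraint} from Lemma \ref{prop:shared_constraint} to show that a unilateral deviation feasible for $P'_i(u^{-i*},x^{-i*};x_1)$ corresponds to a feasible point of $P$, and then transfer the resulting $\pi$-inequality to $J^i$ via the potential-function identity. If anything, your presentation is slightly cleaner, since you explicitly invoke the direction of the equivalence that is actually needed (deviation in $\Omega'_i$ implies the full tuple lies in $\mathcal{F}'$), whereas the paper states the implication the other way and relies implicitly on the iff.
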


\begin{proof}
Suppose $(u^{*},x^{*})\in \mathcal{F}^{'}$ is a global minimum of the problem $P$. Then,
$$
\pi(u^*,x^*;x_1) \leq \pi (u,x;x_1), \quad
\forall (u,x) \in \mathcal{F}^{'}.
$$
For some $i \in \mathcal{N}$, split the argument $u=(u^i,u^{-i}) \in U$ and $x=(x^i,x^{-i}) \in X$, we can rewrite the above inequality as,
$$
\pi(u^*,x^*;x_1) \leq \pi (u^i,u^{-i},x^{i},x^{-i};x_1), \forall (u^i,u^{-i},x^{i},x^{-i}) \in \mathcal{F}^{'}.
$$
The inequality still holds even if we replace $u^{-i}$ and $x^{-i}$ by $u^{-i*}$ and $x^{-i*}$, the actions and state conjectures of other players at the minimum. That is,
\begin{align*}
\pi(u^*,x^*;x_1) \leq \pi (u^{i},u^{-i*},x^{i},&x^{-i*};x_1),
 \\ &\forall (u^{i},u^{-i*},x^{i},x^{-i*}) \in \mathcal{F}^{'}.
\end{align*}
Since $\mathcal{F}^{'}$ is a shared constraint mapping of $\Omega^{'}(u,x)$, by Lemma \ref{prop:shared_constraint},
$(u^{i},u^{-i*},x^{i},x^{-i*}) \in \mathcal{F}' \Longrightarrow (u^{i},x^{i}) \in \Omega_i^{'}(u^{-i*},x^{-i*};x_1). $
Hence, we can rewrite the inequality as,
\begin{align*}
\pi(u^*,x^*;x_1) \leq \pi (u^{i},u^{-i*},x^{i},&x^{-i*};x_1),
\\ &\forall (u^{i},x^{i}) \in \Omega_i^{'}(u^{-i*},x^{-i*};x_1).
\end{align*}
Since $\pi$ is a potential function of the DTDG,
$\forall i \in \mathcal{N}$,
\begin{align*}
J^i(u^{i*},x^{i*},u^{-i*},x^{-i*};x_1) \leq J^i (u^{i},x^{i},u^{-i*},x^{-i*};x_1), \\ \forall (u^{i},x^{i}) \in \Omega_i^{'}(u^{-i*},x^{-i*};x_1).
\end{align*}
Hence, $(u^*,x^*) \in \mathcal{F'}$ is an open-loop Nash equilibrium of the potential DTDG with consistent conjectures.
\end{proof}

Thus, for a potential DTDG with consistent conjectures, a minimiser of the problem $P$ is an open-loop Nash equilibrium. Conditions under which an equilibrium exist for a potential DTDG with consistent conjectures is given in the following corollaries.
\begin{corollary}
\label{lm:compact}
Consider a potential DTDG with consistent conjectures. Suppose the potential function $\pi$ is continuous and the set $\mathcal{F}^{'}$ is compact, then the game has an equilibrium given by the minimizer of the problem $P$. The shared constraint set, $\mathcal{F}^{'}$ is compact if the sets $U,X$ are compact and $f_k$, the state mapping is continuous for all $k \in \mathcal{K}$.
\end{corollary}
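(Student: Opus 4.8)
The plan is to reduce the claim to the Weierstrass extreme value theorem via Theorem~\ref{th:existence}, exactly as Corollary~\ref{Coro_with_compactness} was obtained in the quasi-potential setting. By Theorem~\ref{th:existence}, every global minimizer of the optimisation problem $P$ --- which minimises the continuous potential $\pi$ over the shared constraint set $\mathcal{F}^{'}$ --- is an open-loop Nash equilibrium of the potential DTDG with consistent conjectures. Hence it suffices to exhibit a minimizer of $P$, and for that it is enough that $\mathcal{F}^{'}$ be nonempty and compact with $\pi$ continuous on it.

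First I would note that $\mathcal{F}^{'}$ is always nonempty: for any $u \in U$, forward-simulate the dynamics by setting $w_2 = f_1(x_1,u^1_1,\dots,u^N_1)$ and $w_{k+1} = f_k(w_k,u^1_k,\dots,u^N_k)$ for $k \in \mathcal{K}\backslash\{1\}$; since each $f_k$ takes values in $X_{k+1}$ we have $w_{k+1}\in X_{k+1}$, and putting $x^i \equiv (w_2,\dots,w_{K+1})$ for every $i \in \mathcal{N}$ produces a point $(u,x)$ meeting all the defining relations of $\mathcal{F}^{'}$ in \eqref{eqn:fixed_points_dash}, the consistency constraints holding trivially by construction. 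For compactness under the stated hypotheses, I would use the decomposition $\mathcal{F}^{'} = (U\times\mathcal{A})\cap\mathcal{H}$ from Lemma~\ref{prop:shared_constraint}. When $U$ and $X$ are compact, $U\times X$ is compact, so it remains to show $(U\times\mathcal{A})\cap\mathcal{H}$ is closed in $U\times X$: the set $\mathcal{A}$ is carved out of $X$ by the finitely many equalities $x^i_{k+1}=x^j_{k+1}$, each a continuous (in fact linear) condition, hence $\mathcal{A}$ is closed; and $\mathcal{H}$ is the common zero set of the finitely many continuous maps $(u,x)\mapsto x^i_2 - f_1(x_1,u^i_1,u^{-i}_1)$ and $(u,x)\mapsto x^i_{k+1} - f_k(x^i_k,u^i_k,u^{-i}_k)$, which are continuous precisely because each $f_k$ is, hence $\mathcal{H}$ is closed. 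Thus $\mathcal{F}^{'}$ is a closed subset of the compact set $U\times X$ and is itself compact; the Weierstrass theorem then gives a minimizer of $P$, and Theorem~\ref{th:existence} upgrades it to an open-loop Nash equilibrium.

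I do not anticipate a genuine obstacle: the argument is the standard ``continuous objective over compact feasible set'' template. The only points needing a little care are confirming that the recursively defined set $\mathcal{H}$ is closed --- settled by observing that a finite intersection of preimages of $\{0\}$ under continuous maps is closed, with the recursion posing no issue since it involves only finitely many stages --- and the (minor) check that $\mathcal{F}^{'}\neq\emptyset$, which is immediate once one remembers that the state equations deterministically propagate a unique trajectory forward from the known initial state $x_1$ and that $f_k$ maps into $X_{k+1}$.
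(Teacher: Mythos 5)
Your proposal is correct and follows exactly the route the paper intends: the paper leaves this corollary unproven, noting only (for the analogous Corollary in the quasi-potential setting) that it "can be easily proved using Weierstrass Theorem," and your argument—nonemptiness by forward simulation, compactness of $\mathcal{F}^{'}=(U\times\mathcal{A})\cap\mathcal{H}$ as a closed subset of the compact set $U\times X$, Weierstrass to get a minimizer of $P$, and Theorem~\ref{th:existence} to convert it into an equilibrium—is precisely that standard template, with the closedness details filled in correctly.
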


Conditions for the existence of an equilibrium without the compactness of state space and action set is given as follows.
\begin{corollary}
\label{lm:compact}
Consider a potential DTDG with consistent conjectures. Suppose the potential function $\pi$ is continuous and coercive and $f_k$ is continuous for all $k \in \mathcal{K}$, then the game has an equilibrium.
\end{corollary}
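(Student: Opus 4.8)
The plan is to reduce the claim to the solvability of the optimisation problem $P$ and then invoke Theorem~\ref{th:existence}. By that theorem any global minimizer of $P$ is an open-loop Nash equilibrium of the potential DTDG with consistent conjectures, so it suffices to show that $\pi(\cdot,\cdot;x_1)$ attains its minimum over $\mathcal{F}^{'}$. This is a Weierstrass-type argument; since we no longer assume $U$ and $X$ compact (as in the preceding corollary), we use coercivity of $\pi$ to compensate for the possible unboundedness of $\mathcal{F}^{'}$.

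First I would check that $\mathcal{F}^{'}$ is nonempty and closed. For nonemptiness, pick any $u \in U$: the recursion $x^i_2 = f_1(x_1,u^1_1,\dots,u^N_1)$, $x^i_{k+1} = f_k(x^i_k,u^1_k,\dots,u^N_k)$ determines a trajectory for each player, these trajectories lie in $X$ since $X^i_{k+1} \equiv X_{k+1}$, and they trivially satisfy the consistency constraints $x^i_{k+1} = x^j_{k+1}$; hence $(u,x)\in\mathcal{F}^{'}$. For closedness, read off from~\eqref{eqn:fixed_points_dash} that $\mathcal{F}^{'}$ is the intersection of $U\times X$ with the zero sets of the maps $(u,x)\mapsto x^i_{k+1}-f_k(x^i_k,u^1_k,\dots,u^N_k)$ and $(u,x)\mapsto x^i_{k+1}-x^j_{k+1}$; continuity of each $f_k$ makes every one of these sets closed (taking $U$ and $X$ closed, as in the compactness hypothesis of the preceding corollary), so $\mathcal{F}^{'}$ is closed.

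Next, fix some $(u_0,x_0)\in\mathcal{F}^{'}$, put $\alpha = \pi(u_0,x_0;x_1)$, and consider the sublevel set $L_\alpha = \{(u,x)\in\mathcal{F}^{'} : \pi(u,x;x_1) \le \alpha\}$, which is nonempty. Coercivity of $\pi$ over $\mathcal{F}^{'}$ forces $L_\alpha$ to be bounded: otherwise there would be a sequence in $\mathcal{F}^{'}$ with norm tending to infinity along which $\pi$ remains $\le \alpha$, contradicting the defining property that the coercive $\liminf$ equals $\infty$. Being also closed (the intersection of the closed set $\mathcal{F}^{'}$ with a closed sublevel set of the continuous function $\pi$), $L_\alpha$ is compact, so by the Weierstrass theorem $\pi(\cdot,\cdot;x_1)$ attains its minimum over $L_\alpha$ at some $(u^*,x^*)$; since every point of $\mathcal{F}^{'}\setminus L_\alpha$ has $\pi$-value exceeding $\alpha \ge \pi(u^*,x^*;x_1)$, the point $(u^*,x^*)$ is a global minimizer of $P$. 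Applying Theorem~\ref{th:existence} then yields that $(u^*,x^*)$ is the desired equilibrium.

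The main, and in fact essentially only, obstacle is the step that turns coercivity into a bounded minimizing set without any compactness of $X$: one must use that coercivity is stated relative to $\mathcal{F}^{'}$ rather than the whole ambient space, which is precisely what bounds the relevant sublevel sets. The remaining ingredients --- closedness of $\mathcal{F}^{'}$ from continuity of the $f_k$, and the Weierstrass theorem on the resulting compact sublevel set --- are routine, and parallel the reasoning already used for $\mathcal{F}^q$ in Corollary~\ref{Cor:quasi_coercive}.
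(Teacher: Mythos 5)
Your proof is correct and follows exactly the route the paper intends: the paper states this corollary without proof, but the surrounding text makes clear the argument is to produce a global minimizer of problem $P$ and invoke Theorem~\ref{th:existence}, which is precisely what you do via the standard coercivity/closed-sublevel-set refinement of Weierstrass. The only point worth flagging is the one you already flag yourself: closedness of $\mathcal{F}^{'}$ requires $U$ and $X$ to be closed, an assumption the corollary (like the analogous Corollary~\ref{Cor:quasi_coercive}) leaves implicit.
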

Thus, for a potential DTDG with consistent conjectures under these conditions we can guarantee the existence of an open-loop Nash equilibrium and the equilibrium is given by a minimizer of problem $P$.

\subsection{$\epsilon$-Nash equilibrium of the original game} \label{subsec:epsilon_NE}
We have shown that an equilibrium of the original game is an equilibrium of the game with consistent conjectures. In this section, the reverse relation of the equilibrium is being considered, i.e., an equilibrium of the DTDG with consistent conjectures is related to $\epsilon$-Nash equilibrium of the original DTDG. The definition of an $\epsilon$-Nash equilibrium of the original game is given as follows.    
\begin{definition}
Consider a DTDG in state conjecture formulation as defined in Section \ref{sec_equivalent_formulation}. For $\epsilon \geq 0$, a point $\{u^{i*},x^{i*}; i \in \mathcal{N}\} \in (U \times X)$ is an $\epsilon$-Nash equilibrium of the original game if $\forall i \in \mathcal{N}$, given $u^{-i*} \in U^{-i}$ and $x_1$,
\begin{multline*}
J^{i}(u^{i*},x^{i*};u^{-i*},x_1) \leq  \underset{\tilde{u}^i,\tilde{x}^i}{\inf}  J^{i}(\tilde{u}^i,\tilde{x}^i;u^{-i*},x_1) + \epsilon,
\end{multline*}
where the $\inf$ is over ($\tilde{u}^i,\tilde{x}^i)\in \Omega_i (u^{-i*};x_1)$.
\end{definition} 
  In this section, we first analyse the case of linear-quadratic DTDGs with a convex cost functions and then we extend the analysis for a general class of DTDGs. 

    \subsubsection{$\epsilon$-Nash equilibrium for linear-quadratic DTDGs }
        
    The aim of this section is to relate a particular equilibrium of the linear-quadratic DTDG with consistent conjectures to an $\epsilon$-Nash equilibrium of the original game.    
    We use a result on the \textit{exact penalty functions} from the book \cite{nocedal99numerical} to find the $\epsilon$-Nash equilibrium relation for linear-quadratic DTDGs. In order to state the result, consider a general optimisation problem denoted by $\mathbb{P}$ with equality constraints.
        $$
        \problemsmall{$\mathbb{P}$}
        {x\in \mathbb{R}^n}
        {f(x)}
        {h_i(x)=0, i= 1,\dots, p,}
        $$
        where $f:\mathbb{R}^n\rightarrow \mathbb{R}$ and $h_i:\mathbb{R}^n\rightarrow \mathbb{R}^p$, $p\leq n$ are continuously differentiable functions. The following proposition provides the result that we use to derive the $\epsilon$-Nash equilibrium relation.
        
        \begin{proposition}(\textit{Theorem 17.3} in \cite{nocedal99numerical})\label{nocedal_result} \label{nocedal_result}
            Suppose $x^*$ is a strict local solution of the nonlinear programming problem $\mathbb{P}$ at which the first order KKT conditions are satisfied with Lagrange multipliers $\lambda_i^{*}, i=1,\dots,p$. Then $x^*$ is a local minimizer of $\phi (x;\mu)$ for all $\mu > \mu^*$, where $\mu^*= \underset{i=1,\dots,p}{\max}|\lambda_i^{*}|$ and $\phi(x;\mu)= f(x)+ \mu \sum\limits_{i=1,\dots,p}^{}|h_i(x)|$.
        \end{proposition}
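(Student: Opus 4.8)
The plan is to argue by contradiction. Fix $\mu>\mu^*$ and suppose $x^*$ is \emph{not} a local minimizer of $\phi(\cdot\,;\mu)$. Since $h_i(x^*)=0$ for every $i$ we have $\phi(x^*;\mu)=f(x^*)$, so there is a sequence $x_k\to x^*$, $x_k\neq x^*$, with $\phi(x_k;\mu)<f(x^*)$. Write $t_k=\|x_k-x^*\|$ and $d_k=(x_k-x^*)/t_k$; after passing to a subsequence we may assume $d_k\to d$ for a unit vector $d$. The aim is to derive a lower bound of the form $\phi(x_k;\mu)\ge f(x^*)+(\text{a quantity that is strictly positive for large }k)$, contradicting the choice of $x_k$.

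First I would Taylor-expand at $x^*$: $f(x_k)=f(x^*)+\nabla f(x^*)^\top(x_k-x^*)+o(t_k)$ and $h_i(x_k)=\nabla h_i(x^*)^\top(x_k-x^*)+o(t_k)$. The first-order KKT conditions give $\nabla f(x^*)+\sum_{i=1}^{p}\lambda_i^*\nabla h_i(x^*)=0$, so $\nabla f(x^*)^\top(x_k-x^*)=-\sum_i\lambda_i^*\,\nabla h_i(x^*)^\top(x_k-x^*)=-\sum_i\lambda_i^* h_i(x_k)+o(t_k)$. Substituting into $\phi$ and using $|\lambda_i^*|\le\mu^*$ with the triangle inequality gives
\[
\phi(x_k;\mu)\;\ge\; f(x^*)\;+\;(\mu-\mu^*)\sum_{i=1}^{p}|h_i(x_k)|\;+\;o(t_k).
\]
If $d$ is not tangent to the linearized feasible set, i.e.\ $\nabla h_j(x^*)^\top d\neq 0$ for some $j$, then $\sum_i|h_i(x_k)|\ge|h_j(x_k)|=t_k\,|\nabla h_j(x^*)^\top d_k|+o(t_k)\ge c\,t_k$ for some $c>0$ and all large $k$, so the middle term dominates the $o(t_k)$ remainder and $\phi(x_k;\mu)>f(x^*)$ eventually, a contradiction.

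The remaining case, $\nabla h_i(x^*)^\top d=0$ for all $i$, is the main obstacle: there the displayed estimate only yields $\phi(x_k;\mu)\ge f(x^*)+o(t_k)$, and closing it is exactly where the hypothesis that $x^*$ is a \emph{strict} local solution (not merely a KKT point) is needed. The point is that, since $\nabla_x L(x^*,\lambda^*)=0$ for $L(x,\lambda)=f(x)+\sum_i\lambda_i h_i(x)$, the $o(t_k)$ slack equals the Lagrangian increment $L(x_k,\lambda^*)-L(x^*,\lambda^*)$, which is a \emph{second-order} quantity along these tangent directions; strict local optimality of $x^*$ forces the curvature of $L$ on the critical cone to be nonnegative (and the standard second-order sufficient conditions make it strictly positive), giving $\phi(x_k;\mu)\ge f(x^*)+c't_k^2+o(t_k^2)$ with $c'\ge 0$, which together with the nonnegative constraint term contradicts $\phi(x_k;\mu)<f(x^*)$ for large $k$. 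An alternative route for this case is to project each $x_k$ onto the feasible set (legitimate under the constraint qualification implicit in the KKT hypothesis), obtaining $\bar x_k$ with $\|x_k-\bar x_k\|=O(\|h(x_k)\|_1)=o(t_k)$, $f(\bar x_k)\ge f(x^*)$, and $f(x_k)=f(\bar x_k)+o(t_k)$, and then extracting the second-order gain. Finally, for the use made of this proposition in the paper the constraints $h_i$ are affine (the state recursions of a linear--quadratic DTDG), so $h_i(x_k)=\nabla h_i(x^*)^\top(x_k-x^*)$ exactly, the projection is onto an affine subspace, and the tangential case reduces to verifying that the second-order growth of $f$ at the strict local solution outweighs the higher-order-small constraint violation.
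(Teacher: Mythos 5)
First, a point of comparison: the paper offers no proof of this proposition at all --- it is imported verbatim as Theorem~17.3 of Nocedal and Wright --- so your attempt can only be judged on its own terms rather than against an in-paper argument. Your setup is sound: the contradiction sequence, the estimate $\phi(x_k;\mu)\ge f(x^*)+(\mu-\mu^*)\sum_i|h_i(x_k)|+o(t_k)$ obtained from Taylor's theorem and the KKT identity, and the disposal of the non-tangential case ($\nabla h_j(x^*)^\top d\neq 0$ for some $j$) are all correct.

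The tangential case, however, is not closed, and the route you lean on cannot close it. Strict local optimality does \emph{not} force nonnegative curvature of the Lagrangian on the critical cone --- that is the second-order \emph{necessary} condition, which itself requires a constraint qualification, and second-order sufficiency is not a hypothesis here. Concretely, take $n=p=1$, $h(x)=x^2$, $f(x)=x^4-x^2$: the feasible set is $\{0\}$, so $x^*=0$ is (vacuously) a strict local solution; KKT holds with $\lambda^*=0$, hence $\mu^*=0$; yet $L(\cdot,0)=f$ has strictly negative curvature at $0$ and $\phi(x;\mu)=x^4-(1-\mu)x^2$ has a strict local \emph{maximum} at $0$ for every $\mu\in(0,1)$. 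This shows both that your claimed bound $\phi(x_k;\mu)\ge f(x^*)+c't_k^2+o(t_k^2)$ with $c'\ge 0$ is unavailable (and would give no contradiction anyway when $c'=0$ against a possibly negative $o(t_k^2)$ remainder), and that the statement is false without a constraint qualification --- so no argument using only ``strict local solution plus KKT'' can succeed. Your ``alternative route'' via projection is the right one, but what finishes it is not a second-order gain; it is a first-order cancellation. Under LICQ/MFCQ one has the error bound $\|x_k-\bar x_k\|\le C\|h(x_k)\|_1$ for the projection $\bar x_k$ onto the feasible set, and then
\begin{align*}
f(x_k)&=f(\bar x_k)+\nabla f(\bar x_k)^\top(x_k-\bar x_k)+O(\|x_k-\bar x_k\|^2)\\
&\ge f(x^*)-\textstyle\sum_i\lambda_i^*\,h_i(x_k)+o(\|h(x_k)\|_1)\\
&\ge f(x^*)-\mu^*\|h(x_k)\|_1+o(\|h(x_k)\|_1),
\end{align*}
using $f(\bar x_k)\ge f(x^*)$, $\nabla f(\bar x_k)\to\nabla f(x^*)=-\sum_i\lambda_i^*\nabla h_i(x^*)$, and $\nabla h_i(x^*)^\top(x_k-\bar x_k)=h_i(x_k)+o(\|h(x_k)\|_1)$. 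Adding the penalty gives $\phi(x_k;\mu)\ge f(x^*)+(\mu-\mu^*-o(1))\|h(x_k)\|_1\ge f(x^*)$ for large $k$, a contradiction that subsumes both of your cases and uses no curvature information. For the affine constraints to which the paper actually applies the proposition this is essentially immediate, as you observe, but in the general statement the constraint qualification must be invoked explicitly --- it does not follow from the mere existence of multipliers.
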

        
    It can be easily verified that if the problem $\mathbb{P}$ is a convex optimization problem, then the result in Proposition \ref{nocedal_result} is also valid for a global solution. Consider a linear-quadratic DTDG as defined in Definition \ref{def_LQ} with $Q^i_{k+1} \succ 0, \forall i \in \mathcal{N}, \forall k \in \mathcal{K}$. Therefore, a player's problem in a linear-quadratic DTDG is a convex optimization problem. Let $(u^*,x^*)\in U\times X$ be an equilibrium of the linear-quadratic DTDG with consistent conjectures that we are interested in. For player $i \in \mathcal{N}$, let $(u^{-i*},x^{-i*})\in U^{-i}\times X^{-i}$ be the actions and state conjectures of other players at the equilibrium of the linear-quadratic DTDG with consistent conjectures. Given $(u^{-i*},x^{-i*})$, the player $i$'s problem in a linear-quadratic DTDG with consistent conjectures is given by the following  problem which is denoted by $L_i(u^{-i*},x^{-i*};x_1)$.
        \begin{align*}
        {\underset{{u}^i,x^i}{\min}} \sum\limits_{k\in\mathcal{K}}&\frac{1}{2}\Bigl( x^{i'}_{k+1}Q^i_{k+1}x^i_{k+1}+ u^{i'}_k R^{ii}_k u^{i}_{k} +  \sum\limits_{j \in \mathcal{N}\backslash\{i\}}u^{j^{*'}}_kR^{ij}_ku^{j*}_k\Bigr) \\ &
        \text{s.t.}     {({u}^i,x^i) \in \Omega_i^{'}(u^{-i*},x^{-i*};x_1), }
        \end{align*}
            where, in this case,
            \begin{align*}
                \Omega_i^{'}(u^{-i*},x^{-i*};x_1) = \{u^i_1,\dots,u^i_K,x^i_2,\dots,x^i_{K+1}| u^i_k \in U^i_k,\\ \forall k \in \mathcal{K}, x^i_{k+1}\in X^{i}_{k+1}, \forall k \in \mathcal{K}, x^i_{k+1}= A_kx^i_k+b^i_k u^i _k +  \\ \sum\limits_{j\in\mathcal{N}\backslash \{i\}}b^j_ku^{j*}_k, \forall k \in \mathcal{K}, x^i_{k+1}=x^{j*}_{k+1}, \forall j \in \mathcal{N}\backslash\{i\}, \forall k \in \mathcal{K} \}. 
                \end{align*}
            
            Here also $u^i =(u^i_1,\dots, u^i_K)$ and $x^i=(x^i_2,\dots,x^i_{K+1})$. Consider the player $i$'s problem with an exact penalty function for the consistent state conjecture constraint with penalty parameter $\mu^{i}$. Let us denote this problem by $\tilde{L}_i(u^{-i*},x^{-i*};\mu^i)$ and is given by the following problem. 
            \begin{align*}
            \underset{{u}^i,{x}^i}{\min} \sum\limits_{k\in\mathcal{K}}\frac{1}{2}\Bigl( x^{i'}_{k+1}Q^i_{k+1}x^i_{k+1}+ u^{i'}_k R^{ii}_k u^{i}_{k} +& \\  \sum\limits_{j \in \mathcal{N}\backslash\{i\}}u^{j^{*'}}_kR^{ij}_ku^{j*}_k + \mu^{i} \sum\limits_{j \in \mathcal{N}\backslash\{i\}} &|x^i_{k+1} - x^{j*}_{k+1}| \Bigr) \\ \text{s.t.} {({u}^i,{x}^i) \in \Omega_i(u^{-i*};x_1)}, \text{where,} 
    \end{align*}
        \begin{align*} \Omega_i(u^{-i*};x_1)= \{u^i_1,\dots,u^i_K,x^i_2,\dots,x^i_{K+1}| u^i_k \in U^i_k, \\ \forall k \in \mathcal{K}, x^i_{k+1}\in X^{i}_{k+1}, \forall k \in \mathcal{K}, \\x^i_{k+1}= A_kx^i_k+b^i_k u^i _k +  \sum\limits_{j\in\mathcal{N}\backslash \{i\}}b^j_ku^{j*}_k, \forall k \in \mathcal{K} \}.
        \end{align*}
    
        Thus, we have two problems to represent the player $i$'s problem in a linear-quadratic DTDG with consistent conjectures. One is $L_i(u^{-i*},x^{-i*};x_1)$ with the consistent conjecture condition in the constraint set and another is  $\tilde{L}_i(u^{-i*},x^{-i*};\mu^i)$ with exact penalty function for the consistent conjecture condition. Though the problems are different, the equivalence of equilibria of games having these problems can be derived directly from Proposition \ref{nocedal_result} and is stated as follows. 
        \begin{proposition}\label{prop:epsilon_LQ}
            Consider a linear-quadratic DTDG with consistent conjectures with $Q^i_{k+1} \succ 0, \forall i \in \mathcal{N}, \forall k \in \mathcal{K}$. Suppose $(u^{*},x^{*}) \in \mathcal{F}^{'}$ is an open-loop Nash equilibrium of the game. For $i \in \mathcal{N}$, let $(u^{-i*},x^{-i*}) \in U^{-i}\times X^{-i}$ be the actions and state conjectures of other players at this equilibrium. Let the player $i$'s problem in a linear-quadratic DTDG with consistent conjecture be given by $L_i (u^{-i*},x^{-i*};x_1)$.
             Suppose the Lagrange multipliers at the equilibrium of the game corresponding to the consistent state conjecture constraints are given by $\lambda^{ij*}_{k+1}, \forall j \in \mathcal{N} \backslash \{i\}, \forall k \in \mathcal{K}$. Then $(u^{*},x^{*})$ is also an equilibrium of the game with the player $i$'s problem given by $\tilde{L}_i(u^{-i*},x^{-i*};\mu^i)$ for all $\mu^{i} > \mu^{i*}$, where, 
            \begin{equation}\label{penalty_value}
            \mu^{i*} = \underset{k \in \mathcal{K}}{\max} \underset{j\in\mathcal{N}\backslash \{i\} }{\max}|\lambda^{ij*}_{k+1}|.
            \end{equation}
            \end{proposition}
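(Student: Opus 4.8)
The plan is to apply the exact‑penalty result of Proposition \ref{nocedal_result} separately to each player's optimization problem, using the fact that for a linear‑quadratic DTDG with $Q^i_{k+1}\succ 0$ every such problem is convex, so that the \emph{local} conclusions of Proposition \ref{nocedal_result} can be upgraded to \emph{global} ones. The whole argument is then just a matter of identifying, inside player $i$'s problem $L_i(u^{-i*},x^{-i*};x_1)$, which constraints get penalized and checking that the multipliers of the penalized constraints are exactly the $\lambda^{ij*}_{k+1}$ named in the statement.

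First I would record what it means for $(u^*,x^*)\in\mathcal{F}'$ to be an open‑loop Nash equilibrium: by Definition \ref{def:consistent_conjecture_NE}, for each $i\in\mathcal{N}$ the pair $(u^{i*},x^{i*})$ is a global minimizer of $L_i(u^{-i*},x^{-i*};x_1)$. In this problem the variables $(u^i,x^i)$ are subject only to equality constraints, namely the affine state recursion that defines $\Omega_i(u^{-i*};x_1)$ together with the consistency equalities $x^i_{k+1}=x^{j*}_{k+1}$ (recall $U^i_k=\mathbb{R}^{m_i}$ and, in the linear‑quadratic setting, $X^i_{k+1}$ is a full Euclidean space). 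Eliminating $x^i$ through the affine state recursion turns $L_i$ into a strictly convex quadratic program in $u^i$ alone (strict convexity from $R^{ii}_k\succ 0$, convexity of the $Q^i_{k+1}\succeq 0$ terms), whose only remaining constraints are the affine consistency equalities; this is the problem $\mathbb{P}$ in the notation of Proposition \ref{nocedal_result}, with the scalar components of $x^i_{k+1}-x^{j*}_{k+1}$ playing the role of the $h_\ell$. Since $(u^*,x^*)$ is an equilibrium, $u^{i*}$ is the unique global, hence strict local, solution of $\mathbb{P}$, and because all its constraints are affine it satisfies the first‑order KKT conditions with multipliers precisely the $\lambda^{ij*}_{k+1}$.

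Next I would invoke Proposition \ref{nocedal_result}: $u^{i*}$ is a local minimizer of $\phi(\cdot;\mu^i)=f(\cdot)+\mu^i\sum_{k}\sum_{j\neq i}\|x^i_{k+1}-x^{j*}_{k+1}\|_1$ for every $\mu^i>\mu^{i*}$ with $\mu^{i*}=\max_k\max_{j\neq i}|\lambda^{ij*}_{k+1}|$, which is exactly \eqref{penalty_value}. Because $f$ is convex quadratic and the penalty term is a nonnegative combination of absolute values of affine functions, $\phi(\cdot;\mu^i)$ is convex, so (as remarked just after Proposition \ref{nocedal_result}) ``local'' may be replaced by ``global.'' Re‑introducing the variables $x^i$ through the state recursion, minimizing $\phi(\cdot;\mu^i)$ over $u^i$ is the same as solving $\tilde{L}_i(u^{-i*},x^{-i*};\mu^i)$, so $(u^{i*},x^{i*})$ solves $\tilde{L}_i(u^{-i*},x^{-i*};\mu^i)$. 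Since this holds for every $i\in\mathcal{N}$ at the common point $(u^*,x^*)$ with the rivals' data $(u^{-i*},x^{-i*})$ unchanged, $(u^*,x^*)$ is an open‑loop Nash equilibrium of the penalized game, which is the claim.

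The main obstacle I anticipate is the bookkeeping needed to apply Proposition \ref{nocedal_result} to exactly the right problem: one must split the constraints of $L_i$ into those that are penalized (the consistency equalities) and those that are retained (the state recursion), verify that after eliminating $x^i$ the retained constraints disappear so that $\mathbb{P}$ carries only the penalized equalities, and confirm that the multipliers of this reduced problem coincide with the multipliers $\lambda^{ij*}_{k+1}$ attached to the consistency constraints in $L_i$. A secondary point is the local‑to‑global upgrade — strict convexity from $R^{ii}_k\succ 0$ supplies the ``strict local solution'' hypothesis of Proposition \ref{nocedal_result}, and convexity of $\phi$ makes its stationary points global minimizers — but this is routine. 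The remaining facts (affine constraints need no constraint qualification for KKT; $\ell_1$ penalization of a vector equality means summing absolute values of its components) are standard and need only be stated.
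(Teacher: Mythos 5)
Your proposal is correct and follows essentially the same route as the paper: the paper's proof is simply that $Q^i_{k+1}\succ 0$ makes $L_i(u^{-i*},x^{-i*};x_1)$ a convex problem, so the local conclusion of Proposition \ref{nocedal_result} upgrades to a global one and the result follows player by player. Your additional step of eliminating $x^i$ through the affine state recursion—so that the problem handed to Proposition \ref{nocedal_result} carries only the penalized consistency equalities rather than a mix of penalized and retained constraints—is a worthwhile refinement of a detail the paper glosses over, but it does not change the argument's substance.
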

            \begin{proof}
                Since $Q^i_{k+1} \succ 0, \forall i \in\mathcal{N}, \forall k \in \mathcal{K}$, the problem $L_i(u^{-i*},x^{-i*};x_1)$ is a convex optimisation problem which has a global solution. By Proposition \ref{nocedal_result}, the result follows.
                \end{proof}
             We now use Proposition \ref{prop:epsilon_LQ} to derive the result which relates the equilibrium of a linear-quadratic DTDG with consistent conjectures to $\epsilon$-Nash equilibrium of the original linear-quadratic DTDG.  Let $(u^*,x^*) \in \mathcal{F}^{'}$ be an equilibrium of the linear-quadratic DTDG with consistent conjectures.
            Suppose player $i \in \mathcal{N}$ plays the best response in the original game to $(u^{-i*},x^{-i*})$. Then in that case let us denote $\mathbf{{x}}^i_{k+1}, k \in \mathcal{K}$ as the best response state trajectory conjectured by player $i$ in the original game. The relation of $(u^{*},x^{*})$ to the $\epsilon$-Nash equilibrium of the original game is stated as follows.
            \begin{theorem}\label{LQ_epsilon}
            Consider a linear-quadratic DTDG with consistent conjectures with $Q^i_{k+1} \succ 0, \forall i \in \mathcal{N}, \forall k \in \mathcal{K}$. Suppose $(u^{*},x^{*}) \in \mathcal{F}^{'}$ is an equilibrium of this game. For player $i \in \mathcal{N}$, let $x^{j*}_{k+1}\in X^j_{k+1}, k \in \mathcal{K}$ be the state conjecture of player $j\in \mathcal{N}\backslash\{i\}$ at the equilibrium of this game. Let $\lambda^{ij*}_{k+1}, \forall j \in \mathcal{N} \backslash \{i\}, \forall k \in \mathcal{K}$ be as in Proposition \ref{prop:epsilon_LQ}. Let $\mathbf{{x}}^i_{k+1}, k \in \mathcal{K}$ be the player $i$'s best response state conjecture in the original game corresponding to $u^{-i*} \in U^{-i}$.
           Then $(u^{*},x^{*})$ is an $\epsilon$-Nash equilibrium of the original game with $\epsilon= \underset{i \in \mathcal{N}}{\max} \{\epsilon_i\}$, where $\epsilon_i$ satisfy $\sum\limits_{k \in \mathcal{K}} \sum\limits_{j \in \mathcal{N}\backslash\{i\}} \mu^{i} |\mathbf{{x}}^i_{k+1} - x^{j*}_{k+1}| \leq 2 \epsilon_i$ and $\mu^{i} > \mu^{i*}$, where $\mu^{i*}$ is given by \eqref{penalty_value}.
            \end{theorem}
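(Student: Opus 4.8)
The plan is to establish the $\epsilon$-Nash inequality for each player separately from the exact-penalty characterization in Proposition~\ref{prop:epsilon_LQ}, and then take the maximum over players. First I would fix $i\in\mathcal{N}$ and apply Proposition~\ref{prop:epsilon_LQ}: since $(u^{*},x^{*})\in\mathcal{F}^{'}$ is an equilibrium of the linear-quadratic DTDG with consistent conjectures and each $Q^{i}_{k+1}\succ 0$, for every $\mu^{i}>\mu^{i*}$ (with $\mu^{i*}$ given by \eqref{penalty_value}) the point $(u^{i*},x^{i*})$ is a global minimizer of the penalized problem $\tilde{L}_i(u^{-i*},x^{-i*};\mu^{i})$ over the feasible set $\Omega_i(u^{-i*};x_1)$, i.e.\ the original game's constraint set for player $i$ without the consistency requirement.

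Next I would expand the objective of $\tilde{L}_i(u^{-i*},x^{-i*};\mu^{i})$ as
\[
J^{i}(u^{i},x^{i};u^{-i*},x_1)+\frac{\mu^{i}}{2}\sum_{k\in\mathcal{K}}\sum_{j\in\mathcal{N}\backslash\{i\}}\bigl|x^{i}_{k+1}-x^{j*}_{k+1}\bigr|,
\]
observing that the first three terms inside the stage bracket of $\tilde{L}_i$ are precisely the stage costs $g^{i}_k$ of $J^{i}$ evaluated along the rivals' actions $u^{-i*}$. Because $(u^{*},x^{*})\in\mathcal{F}^{'}$, the consistency condition $x^{i*}_{k+1}=x^{j*}_{k+1}$ holds for all $j\in\mathcal{N}\backslash\{i\}$ and $k\in\mathcal{K}$, so the penalty term vanishes at $(u^{i*},x^{i*})$. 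Global optimality of $(u^{i*},x^{i*})$ for $\tilde{L}_i$ then yields, for every $(\tilde{u}^{i},\tilde{x}^{i})\in\Omega_i(u^{-i*};x_1)$,
\[
J^{i}(u^{i*},x^{i*};u^{-i*},x_1)\ \le\ J^{i}(\tilde{u}^{i},\tilde{x}^{i};u^{-i*},x_1)+\frac{\mu^{i}}{2}\sum_{k\in\mathcal{K}}\sum_{j\in\mathcal{N}\backslash\{i\}}\bigl|\tilde{x}^{i}_{k+1}-x^{j*}_{k+1}\bigr|.
\]

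Then I would specialize this inequality to player $i$'s best response in the original game. Let $(\mathbf{u}^{i},\mathbf{x}^{i})\in\Omega_i(u^{-i*};x_1)$ be the minimizer of $J^{i}(\cdot\,;u^{-i*},x_1)$ over $\Omega_i(u^{-i*};x_1)$; it exists and is unique because $Q^{i}_{k+1}\succ 0$ and $R^{ii}_k\succ 0$ render the objective, after substituting the state equation, a strictly convex coercive quadratic in $u^{i}$, and $\mathbf{x}^{i}_{k+1}$ is exactly the best-response state conjecture of the statement, so that $J^{i}(\mathbf{u}^{i},\mathbf{x}^{i};u^{-i*},x_1)=\inf_{(\tilde{u}^{i},\tilde{x}^{i})\in\Omega_i(u^{-i*};x_1)}J^{i}(\tilde{u}^{i},\tilde{x}^{i};u^{-i*},x_1)$. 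Substituting $(\tilde{u}^{i},\tilde{x}^{i})=(\mathbf{u}^{i},\mathbf{x}^{i})$ in the previous display and using the hypothesis $\sum_{k\in\mathcal{K}}\sum_{j\in\mathcal{N}\backslash\{i\}}\mu^{i}\bigl|\mathbf{x}^{i}_{k+1}-x^{j*}_{k+1}\bigr|\le 2\epsilon_i$ gives $J^{i}(u^{i*},x^{i*};u^{-i*},x_1)\le\inf_{(\tilde u^i,\tilde x^i)\in\Omega_i(u^{-i*};x_1)}J^{i}(\tilde u^i,\tilde x^i;u^{-i*},x_1)+\epsilon_i$. Since this holds for every $i\in\mathcal{N}$, taking $\epsilon=\max_{i\in\mathcal{N}}\epsilon_i$ shows that $(u^{*},x^{*})$ meets the definition of an $\epsilon$-Nash equilibrium of the original game.

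The only genuine work is bookkeeping: matching the factor $\tfrac12$ in the linear-quadratic stage cost against the penalty weight $\mu^{i}$ in $\tilde{L}_i$ — which is precisely what produces the factor $2$ relating $\epsilon_i$ to the consistency violation $\sum_k\sum_{j\ne i}|\mathbf{x}^{i}_{k+1}-x^{j*}_{k+1}|$ — identifying the $\tilde{L}_i$ objective with $J^{i}$ plus the penalty, and confirming that the best response in the original game is attained so that the infimum in the definition of $\epsilon$-Nash equilibrium may be replaced by $J^{i}(\mathbf{u}^{i},\mathbf{x}^{i};u^{-i*},x_1)$. There is no deeper obstacle: the theorem is in essence a translation of Proposition~\ref{prop:epsilon_LQ}, hence of Theorem~17.3 of \cite{nocedal99numerical}, into the language of $\epsilon$-Nash equilibria.
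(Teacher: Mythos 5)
Your proposal is correct and follows essentially the same route as the paper: invoke Proposition~\ref{prop:epsilon_LQ} to pass to the exact-penalty problem $\tilde{L}_i$, note the penalty vanishes at the consistent equilibrium, evaluate the resulting inequality at the best response $(\mathbf{u}^i,\mathbf{x}^i)$, and use the factor $\tfrac12$ from the stage-cost bracket to turn the hypothesis $\sum_k\sum_{j\ne i}\mu^i|\mathbf{x}^i_{k+1}-x^{j*}_{k+1}|\le 2\epsilon_i$ into the $\epsilon_i$-suboptimality bound. Your added remark that strict convexity and coercivity guarantee the best response is attained (so the infimum is achieved) is a small but welcome point the paper leaves implicit.
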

            
            \begin{proof}
                Suppose $\{(x^{i*},u^{i*}), i \in \mathcal{N}\}$ is an open-loop Nash equilibrium of the linear-quadratic DTDG with consistent conjectures. That is for all $i \in \mathcal{N}$ and for each $\mu^{i} > \mu^{i*}$,
                \begin{multline*}
                 \sum\limits_{k\in\mathcal{K}}\frac{1}{2}\Bigl( x^{i{*'}}_{k+1}Q^i_{k+1}x^{i*'}_{k+1}+ \sum\limits_{j \in \mathcal{N}}u^{j*'}_kR^{ij}_ku^{j*}_k  \Bigr)    \leq \\
                  \sum\limits_{k\in\mathcal{K}}\frac{1}{2}\Bigl( x^{i'}_{k+1}Q^i_{k+1}x^i_{k+1}+  u^{i'}_kR^{ii}_ku^i_k+\sum\limits_{j \in \mathcal{N}\backslash \{i\}}u^{j*'}_kR^{ij}_ku^{j*}_k +\\ \sum\limits_{j \in \mathcal{N}\backslash\{i\}}\mu^{i} |x^i_{k+1} - x^{j*}_{k+1}| \Bigr),
                   \forall     ({u}^i,x^i) \in     \Omega_i(u^{-i*};x_1).            \end{multline*}
                Let $(\mathbf{{u}}^i_1,\dots,\mathbf{{u}}^i_K,\mathbf{{x}}^i_{2},\dots,\mathbf{{x}}^i_{K+1}) \in     \Omega_i(u^{-i*};x_1)$ be the best response of the original linear-quadratic DTDG corresponding to $u^{-i*}$, the actions of other players at the equilibrium of the game with consistent conjectures. Hence,
                \begin{multline*}
                    \sum\limits_{k\in\mathcal{K}}\frac{1}{2}\Bigl( x^{i{*'}}_{k+1}Q^i_{k+1}x^{i*'}_{k+1}+ \sum\limits_{j \in \mathcal{N}}u^{j*'}_kR^{ij}_ku^{j*}_k  \Bigr)    \leq \\
                    \sum\limits_{k\in\mathcal{K}}\frac{1}{2}\Bigl( \mathbf{{x}}^{i'}_{k+1}Q^i_{k+1}\mathbf{{x}}^i_{k+1}+ \mathbf{{u}}^{i'}_kR^{ii}_k\mathbf{{u}}^i_k +\sum\limits_{j \in \mathcal{N}\backslash \{i\}}u^{j*'}_kR^{ij}_ku^{j*}_k + \\ \sum\limits_{j \in \mathcal{N}\backslash\{i\}}\mu^{i} |\mathbf{{x}}^i_{k+1} - x^{j*}_{k+1}| \Bigr).            \end{multline*}
                By rearranging, the above inequality can be written as,
               \begin{multline*}
                   \sum\limits_{k\in\mathcal{K}}\frac{1}{2}\Bigl( x^{i{*'}}_{k+1}Q^i_{k+1}x^{i*'}_{k+1}+ \sum\limits_{j \in \mathcal{N}}u^{j*'}_kR^{ij}_ku^{j*}_k  \Bigr)    \leq \\
                   \sum\limits_{k\in\mathcal{K}}\frac{1}{2}\Bigl( \mathbf{{x}}^{i'}_{k+1}Q^i_{k+1}\mathbf{{x}}^i_{k+1}+ \mathbf{{u}}^{i'}_kR^{ii}_k\mathbf{{u}}^i_k +\sum\limits_{j \in \mathcal{N}\backslash \{i\}}u^{j*'}_kR^{ij}_ku^{j*}_k \Bigr) + \\ \frac{1}{2} \sum\limits_{k\in\mathcal{K}}\sum\limits_{j \in \mathcal{N}\backslash\{i\}}\mu^{i} |\mathbf{{x}}^i_{k+1} - x^{j*}_{k+1}|.            \end{multline*}
                Since $\sum\limits_{k\in\mathcal{K}}\sum\limits_{j \in \mathcal{N}\backslash\{i\}}\mu^{i} |\mathbf{{x}}^i_{k+1} - x^{j*}_{k+1}| \leq 2 \epsilon_i$, $\forall i \in \mathcal{N}$ and $(\mathbf{{u}}^i_1,\dots,\mathbf{{u}}^i_K,\mathbf{{x}}^i_{2},\dots,\mathbf{{x}}^i_{K+1})$  is the best response of the original game,
   \begin{multline*}
   \sum\limits_{k\in\mathcal{K}}\frac{1}{2}\Bigl( x^{i{*'}}_{k+1}Q^i_{k+1}x^{i*'}_{k+1}+ \sum\limits_{j \in \mathcal{N}}u^{j*'}_kR^{ij}_ku^{j*}_k  \Bigr)    \leq \\
                        \underset{({u}^i,{x}^i)\in \Omega_i(u^{-i*};x_1)    }{\inf} \sum\limits_{k\in\mathcal{K}}\frac{1}{2}\Bigl( x^{i'}_{k+1}Q^i_{k+1}x^i_{k+1}+ {u}^{i'}_kR^{ii}_k{u}^i_k  \\+\sum\limits_{j \in \mathcal{N}\backslash \{i\}}u^{j*'}_kR^{ij}_ku^{j*}_k \Bigr)+ \epsilon_i \end{multline*}
                Hence, $\{(x^{i*},u^{i*}); i \in \mathcal{N}\}$ is an $\epsilon$- Nash equilibrium of linear-quadratic DTDG with $\epsilon= \max \{\epsilon_i, i \in \mathcal{N}\}$.
                \end{proof}
                Thus, under conditions specified in Theorem \ref{LQ_epsilon}, an equilibrium of a linear-quadratic DTDG with consistent conjectures is an $\epsilon$-Nash equilibrium of the original linear-quadratic DTDG for a certain $\epsilon$.    Note that the convexity of the cost function and constraint set is only used in Theorem \ref{LQ_epsilon}. Hence the result can be extended to any convex problems, not limited to linear-quadratic games. However, for a general case convexity need not hold. Therefore the next section provides an $\epsilon$-Nash equilibrium relation for a general case, but with additional assumptions.

\subsubsection{$\epsilon$-Nash equilibrium for general DTDGs}
 For a general class of games, we use a result on the \textit{exact penalty functions} from the paper \cite{di1989exact} by Di Pillo and Grippo for deriving the relation of an equilibrium of the game with  consistent conjectures to the $\epsilon$-Nash equilibrium of the original game. The Mangasarian-Fromowitz constraint qualification (MFCQ) (See Definition \ref{def:MFCQ}) needs to be satisfied for the result to hold. For that, we consider an equivalent game having a minor modification from the DTDG with consistent conjectures and we will refer to it as \textit{alternate consistent conjecture DTDG}. In this alternate consistent conjecture DTDG we define an \textit{exact penalty function game} having an exact penalty function for the consistent conjecture condition. We derive the relation of these two games, the alternate consistent conjecture DTDG and the exact penalty function game using the result from \cite{di1989exact}. The $\epsilon$-Nash equilibrium condition is provided utilising the relation of the exact penalty function game in turn to the DTDG with consistent conjectures. 

In order to make the explanation clear, let us denote the original game, the DTDG with consistent conjectures and the alternate consistent conjecture DTDG by $\mathcal{G}_o$, $\mathcal{G}_c$ and $\mathcal{G}_a$  respectively. The difference of $\mathcal{G}_a$, the alternate consistent conjecture DTDG  and $\mathcal{G}_c$, the DTDG with consistent conjecture is that a player's state conjecture is constrained to be consistent with the state conjecture of one other player in the game (in this case, the next labelled player), not all players as in $\mathcal{G}_c$.

 Let $i \in \mathcal{N}$ be a player in a DTDG. Given $u^{-i}\in U^{-i},x^{-i}\in X^{-i}$ and $x_1$, the player $i$'s problem in $\mathcal{G}_a$ is denoted by $\bar{{P}}_i$($u^{-i},x^{-i};x_1$) and is given by the following.
$$
\problemsmall{$\bar{{P}}_i$($u^{-i},x^{-i};x_1$)}
{u^i,x^i}
{J^i(u^i,x^i;u^{-i},x_1)}
{(u^i,x^i) \in \bar{\Omega}_i (u^{-i},x^{-i};x_1),}
$$
where,
\begin{align*}
{\bar{\Omega}_{i}}({u^{-i},x^{-i};x_1})= \{\hat{u}^{i}_1,\dots,\hat{u}^{i}_{K},\hat{x}^{i}_{2},\dots,\hat{x}^{i}_{K+1}|\hat{u}^{i}_{k}\in U^{i}_{k},\\\forall k \in \mathcal{K}, \hat{x}^{i}_{k+1}\in X^{i}_{k+1}, \forall k \in \mathcal{K}, \hat{x}^{i}_{2} = f_{1}(x_{1},\hat{u}^{i}_{1},u^{-i}_{1}), \\ \hat{x}^{i}_{k+1} = f_{k}(\hat{x}^{i}_{k},\hat{u}^{i}_{k}, u^{-i}_{k}), \forall k \in \mathcal{K}  \backslash \{1\}, \hat{x}^{i}_{k+1}=x^{j(i)}_{k+1}, \forall k \in \mathcal{K}\},
\end{align*}
 where the \textit{next labelled player} $j(i) \in \mathcal{N}$ is given by
\begin{equation} \label{next_labelled_player}
{j(i)} =
\left\{
\begin{aligned}
& i+1,  &&  i \in \mathcal{N}\backslash\{N\}
\\[1ex]
& 1,  && i = N.
\end{aligned}
\right.    
\end{equation}

 Using the player $i$'s problem given by $\bar{{P}}_i$($u^{-i},x^{-i};x_1$), the open-loop Nash equilibrium in $\mathcal{G}_a$ can be defined as follows.

\begin{definition}  \label{def:alternate_cons_NE}
    An $N$-tuple of strategies \{$(u^{i*},x^{i*}) \in U^i\times X^i;  i \in \mathcal{N}$\} is said to be an open-loop Nash equilibrium of $\mathcal{G}_a$ if $\forall i \in \mathcal{N}$, given $(u^{-i*},x^{-i*}) \in U^{-i}\times X^{-i}$, the actions and state conjectures of other players at equilibrium of $\mathcal{G}_a$, 
    \begin{align*}
    J^{i}({u}^{i*},{x}^{i*};u^{-i*},x_1) \leq  J^{i}&(\tilde{u}^i,\tilde{x}^i;u^{-i*},x_1),\\&
    \forall (\tilde{u}^i,\tilde{x}^i) \in \bar{\Omega}_i (u^{-i*},x^{-i*};x_1).
\end{align*}
\end{definition}
The equivalence of $\mathcal{G}_c$ and $\mathcal{G}_a$ in terms of the equilibrium set is given in the following proposition.

\begin{proposition}\label{equivalence_equi}
        Consider a DTDG in state conjecture formulation. Suppose \{$(u^{i*},x^{i*}) \in U^i\times X^i; i \in \mathcal{N}$\} is an open-loop Nash equilibrium of $\mathcal{G}_c$, the DTDG with consistent conjectures, then \{$(u^{i*},x^{i*}) \in U^i\times X^i; i \in \mathcal{N}$\} is also an equilibrium of $\mathcal{G}_a$, alternate consistent conjecture DTDG and vice versa.
    \end{proposition}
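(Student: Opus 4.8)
The plan is to reduce the claim to a single combinatorial observation: imposing ``$x^i_{k+1}=x^{j(i)}_{k+1}$ for every player $i$'' (the coupling of $\mathcal{G}_a$) generates exactly the same equality system as imposing ``$x^i_{k+1}=x^j_{k+1}$ for every pair $i\ne j$'' (the coupling of $\mathcal{G}_c$), because the map $j(\cdot)$ of \eqref{next_labelled_player} cycles through all of $\mathcal{N}$. First I would make this precise at the level of fixed-point sets. Let $\bar{\mathcal{F}}$ denote the set of fixed points of the map $\bar{\Omega}$ built from the problems $\bar{P}_i(u^{-i},x^{-i};x_1)$. A point $(u,x)$ lies in $\bar{\mathcal{F}}$ if and only if $u\in U$, the recursion \eqref{state_eqn2} holds for every $i\in\mathcal{N}$, and $x^i_{k+1}=x^{j(i)}_{k+1}$ for all $i\in\mathcal{N}$ and $k\in\mathcal{K}$; chaining these equalities along the cycle $1\to 2\to\cdots\to N\to 1$ gives $x^1_{k+1}=x^2_{k+1}=\cdots=x^N_{k+1}$, which is precisely the consistency block in the description \eqref{eqn:fixed_points_dash} of $\mathcal{F}'$. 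Hence $\bar{\mathcal{F}}=\mathcal{F}'=\mathcal{F}$. In particular, any open-loop Nash equilibrium of $\mathcal{G}_a$ or of $\mathcal{G}_c$ is a point of $\mathcal{F}$, so at such a point all the state conjectures agree: $x^{i*}_{k+1}=x^{j*}_{k+1}$ for all $i,j\in\mathcal{N}$ and $k\in\mathcal{K}$.

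Next I would compare, at a candidate equilibrium point $\{(u^{i*},x^{i*}); i\in\mathcal{N}\}$, the per-player feasible set $\Omega^{'}_i(u^{-i*},x^{-i*};x_1)$ of $\mathcal{G}_c$ with the per-player feasible set $\bar{\Omega}_i(u^{-i*},x^{-i*};x_1)$ of $\mathcal{G}_a$. Both sets impose $\hat{u}^i_k\in U^i_k$, $\hat{x}^i_{k+1}\in X^i_{k+1}$, and the same recursion $\hat{x}^i_2=f_1(x_1,\hat{u}^i_1,u^{-i*}_1)$, $\hat{x}^i_{k+1}=f_k(\hat{x}^i_k,\hat{u}^i_k,u^{-i*}_k)$; they differ only in the consistency clause, which reads $\hat{x}^i_{k+1}=x^{j*}_{k+1}$ for all $j\in\mathcal{N}\backslash\{i\}$ in $\Omega^{'}_i$ and $\hat{x}^i_{k+1}=x^{j(i)*}_{k+1}$ in $\bar{\Omega}_i$. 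Since $j(i)\in\mathcal{N}\backslash\{i\}$ and, by the previous paragraph, all the components $x^{j*}$ ($j\ne i$) of $x^{-i*}$ are mutually equal, both clauses collapse to the single requirement that $\hat{x}^i_{k+1}$ equal this common conjectured value for every $k$. Therefore $\Omega^{'}_i(u^{-i*},x^{-i*};x_1)=\bar{\Omega}_i(u^{-i*},x^{-i*};x_1)$ for every $i\in\mathcal{N}$.

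Finally, since the cost functions $J^i$ are the same in $\mathcal{G}_c$, $\mathcal{G}_a$ and the original DTDG, the defining inequalities of Definition~\ref{def:consistent_conjecture_NE} and of Definition~\ref{def:alternate_cons_NE}, once evaluated at $\{(u^{i*},x^{i*}); i\in\mathcal{N}\}$, are literally the same inequalities ranging over the same feasible sets; combined with $\bar{\mathcal{F}}=\mathcal{F}'$ this yields both directions of the equivalence simultaneously. I expect the only delicate point --- and the one worth emphasizing --- to be the first step: one must argue that the cyclic ``next-labelled-player'' coupling of $\mathcal{G}_a$ forces full mutual consistency, and note that $\bar{\Omega}_i$ is in general a strictly larger set-valued map than $\Omega^{'}_i$, so the collapse of the two feasible sets is not an identity of maps but genuinely relies on the equilibrium-feasibility fact (obtained in the first step) that at an equilibrium all conjectures already coincide.
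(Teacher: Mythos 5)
Your proposal is correct and follows essentially the same route as the paper's own proof: both arguments observe that at any equilibrium of either game all state conjectures coincide (for $\mathcal{G}_a$ by chaining the equalities around the cycle $1\to2\to\cdots\to N\to1$), so that the single-player consistency clause of $\bar{\Omega}_i$ and the all-rivals clause of $\Omega^{'}_i$ collapse to the same condition and the per-player feasible sets, hence the equilibrium inequalities, coincide. Your write-up is somewhat more explicit than the paper's --- in particular in spelling out the cycle-chaining step and in noting that $\bar{\Omega}_i$ and $\Omega^{'}_i$ agree only at equilibrium-feasible points rather than as set-valued maps --- but the underlying argument is the same.
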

\begin{proof}
    $``\Rightarrow"$ Let \{$(u^{i*},x^{i*}) \in U^i\times X^i; i \in \mathcal{N}$\} be an open-loop Nash equilibrium of $\mathcal{G}_c$. Then from Definition \ref{def:consistent_conjecture_NE},
at an equilibrium $\{(u^{i*},x^{i*}), i \in \mathcal{N}\}$ of $\mathcal{G}_c$, since all players conjectures consistently, the consistency condition with $N-1$ players in the set $\Omega^{'}_{i}({u^{-i*},x^{-i*};x_1}) $ can be replaced with a single player condition. Hence, at the equilibrium $\{(u^{i*},x^{i*}), i \in \mathcal{N}\}$ of $\mathcal{G}_c$, $\Omega^{'}_{i}({u^{-i*},x^{-i*};x_1}) = \bar{\Omega}_{i}({u^{-i*},x^{-i*};x_1)}$. Thus, $\{(u^{i*},x^{i*}), i \in \mathcal{N}\}$ is also an equilibrium of $\mathcal{G}_a$.

     $``\Leftarrow"$ Suppose \{$(u^{i\diamond},x^{i\diamond}) \in U^i\times X^i; i \in \mathcal{N}$\} is an equilibrium of $\mathcal{G}_a$. Then from Definition \ref{def:alternate_cons_NE},
         at an equilibrium \{$(u^{i\diamond},x^{i\diamond}); i \in \mathcal{N}$\} of $\mathcal{G}_a$, for all $i \in \mathcal{N}$, $x^{i\diamond}_{k+1}=x^{j\diamond}_{k+1}, \forall j \in \mathcal{N}\backslash\{i\},\forall k \in \mathcal{K}.$ Hence, at the equilibrium $\{(u^{i\diamond},x^{i\diamond}), i \in \mathcal{N}\}$ of $\mathcal{G}_a$, the consistent condition with the adjacent player in $\bar{\Omega}_{i}({u^{-i\diamond},x^{-i\diamond};x_1})$ can be replaced with the consistent condition for all rivals. i.e. at the equilibrium \{$(u^{i\diamond},x^{i\diamond}); i \in \mathcal{N}$\},  $\bar{\Omega}_{i}({u^{-i\diamond},x^{-i\diamond};x_1}) = {\Omega}'_{i}({u^{-i\diamond},x^{-i\diamond};x_1})$ and hence $\{(u^{i\diamond},x^{i\diamond}), i \in \mathcal{N}\}$ is also an equilibrium of $\mathcal{G}_c$.
    \end{proof}
    
Thus, by Proposition \ref{equivalence_equi}, the set of equilibria of both games, $\mathcal{G}_c$ and $\mathcal{G}_a$ are equivalent. Now consider a particular equilibrium $\{(u^{i*},x^{i*}), i \in \mathcal{N}\}$ of $\mathcal{G}_c$. Our aim is to relate this equilibrium to an $\epsilon$-Nash equilibrium of $\mathcal{G}_o$, the original game.
Let $(u^{-i*},x^{-i*}) \in U^{-i}\times X^{-i}$ be the actions and state conjectures of the rivals of player $i \in \mathcal{N}$ at the equilibrium of $\mathcal{G}_c$. Given $u^{-i*}$, the best response set of player $i$ in  $\mathcal{G}_o$ is denoted by ${\mathcal{R}}_i(u^{-i*})$ and is given by the following.
    \begin{align*}
    {\mathcal{R}}_i&(u^{-i*})=\{(u^i,x^i)|\\&(u^i,x^i)\in \arg \underset{(\hat{u}^i,\hat{x}^i)\in {\Omega}_{i}({u^{-i*};x_1})}{\min} J^i(\hat{u}^i,\hat{x}^i;u^{-i*},x_1) \}.
    \end{align*}
    
 Let ${\bar{\mathcal{R}}}_i(u^{-i*},x^{-i*})$ denote the best response of player $i$ in $\mathcal{G}_a$ corresponding to $(u^{-i*},x^{-i*})$ which is given by the following.
    \begin{align*}
        &\bar{{\mathcal{R}}}_i(u^{-i*},x^{-i*})=\{(u^i,x^i)|\\&(u^i,x^i)\in \arg \underset{(\hat{u}^i,\hat{x}^i)\in {\bar{\Omega}}_{i}({u^{-i*},x^{-i*};x_1})}{\min} J^i(\hat{u}^i,\hat{x}^i;u^{-i*},x_1) \}.
        \end{align*}
    
        In order to make the structure of the problem same as the one used by Di Pillo and Grippo in \cite{di1989exact}, we assume ${\mathcal{R}}_i (u^{-i*})$ is bounded and we introduce a compact set $\mathcal{D}_i = \mathcal{D}_i(u^{-i*}), i \in \mathcal{N}$ (with non-empty interior) such that the best response set ${\mathcal{R}}_i (u^{-i*}) \subset \mathring{\mathcal{D}}_i$. The following proposition shows that the best response set of alternate consistent conjecture DTDG $\bar{{\mathcal{R}}}_i(u^{-i*},x^{-i*})$ is also contained in the compact set $\mathring{\mathcal{D}}_i$.
    
    \begin{proposition}
        Consider an equilibrium \{$(u^{i*},x^{i*}) \in U^i\times X^i; i \in \mathcal{N}$\} of $\mathcal{G}_c$, the DTDG with consistent conjectures. Let $(u^{-i*},x^{-i*}) \in U^{-i} \times X^{-i}$ be the actions and state conjectures of rivals of player $i$ at the equilibrium of $\mathcal{G}_c$. Let ${\mathcal{R}}_i(u^{-i*})$ and ${\bar{\mathcal{R}}}_i(u^{-i*},x^{-i*})$ denote the set of best responses of player $i$ in the original game $\mathcal{G}_o$ and the alternate consistent conjecture game $\mathcal{G}_a$ respectively corresponding to $(u^{-i*},x^{-i*})$. If the best response set ${\mathcal{R}}_i(u^{-i*})$ is contained in the compact set $\mathcal{D}_i$, then the best response set ${\bar{\mathcal{R}}}_i(u^{-i*},x^{-i*})$ is also contained in $\mathcal{D}_i$.
    \end{proposition}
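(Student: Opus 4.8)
The plan is to prove the stronger inclusion $\bar{\mathcal{R}}_i(u^{-i*},x^{-i*}) \subseteq \mathcal{R}_i(u^{-i*})$; the assertion then follows at once, since by hypothesis $\mathcal{R}_i(u^{-i*}) \subseteq \mathring{\mathcal{D}}_i \subseteq \mathcal{D}_i$. Note first that $\bar{\mathcal{R}}_i(u^{-i*},x^{-i*})$ is nonempty, because $(u^{i*},x^{i*})$ is itself a best response of player $i$ in $\mathcal{G}_a$ to $(u^{-i*},x^{-i*})$ by Proposition \ref{equivalence_equi}.

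First I would compare the two best-response problems: they share the objective $J^i(\,\cdot\,;u^{-i*},x_1)$ and differ only in the feasible set. The feasible set $\bar{\Omega}_i(u^{-i*},x^{-i*};x_1)$ of $\mathcal{G}_a$ is obtained from $\Omega_i(u^{-i*};x_1)$ by adjoining the single-player consistency constraint $\hat{x}^i_{k+1}=x^{j(i)*}_{k+1}$ for all $k\in\mathcal{K}$, hence $\bar{\Omega}_i(u^{-i*},x^{-i*};x_1)\subseteq\Omega_i(u^{-i*};x_1)$ and the optimal value of the $\mathcal{G}_a$ best-response problem is no smaller than that of the $\mathcal{G}_o$ best-response problem $P_i(u^{-i*};x_1)$. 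Next, because the state equation \eqref{state_eqn2} determines $\hat{x}^i$ uniquely from $\hat{u}^i$ once $x_1$ and $u^{-i*}$ are fixed, every feasible point of $\bar{\Omega}_i$ has state component exactly $x^{j(i)*}$; and since the conjectures of all players agree at the equilibrium of $\mathcal{G}_c$, we have $x^{j(i)*}=x^{i*}$, so $(u^{i*},x^{i*})\in\bar{\Omega}_i$ and, using Proposition \ref{equivalence_equi} once more (equivalently, the coincidence of $\bar{\Omega}_i$ and $\Omega^{'}_i$ at this equilibrium), $(u^{i*},x^{i*})$ minimizes $J^i$ over $\bar{\Omega}_i$. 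Consequently every element of $\bar{\mathcal{R}}_i(u^{-i*},x^{-i*})$ attains the value $v^{*}:=J^i(u^{i*},x^{i*};u^{-i*},x_1)$ and has state component $x^{i*}$.

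It then remains to identify $v^{*}$ with the optimal value of $P_i(u^{-i*};x_1)$, that is, to show that player $i$'s best response in the original game already attains its minimum along a state trajectory equal to $x^{i*}$. Granting this, any minimizer of $J^i$ over the smaller set $\bar{\Omega}_i$ lies in $\Omega_i$ and attains the minimum over $\Omega_i$, so it belongs to $\mathcal{R}_i(u^{-i*})$; this gives $\bar{\mathcal{R}}_i(u^{-i*},x^{-i*})\subseteq\mathcal{R}_i(u^{-i*})\subseteq\mathcal{D}_i$. I expect this value identification to be the main obstacle, and it is precisely where structure beyond compactness must be used: the conjectured trajectory in $\Omega_i$ is a function of the control, $x^{i*}$ is the image of $u^{i*}$ under this map, and the consistency constraint defining $\bar{\Omega}_i$ therefore removes only those controls that fail to regenerate $x^{i*}$. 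One must argue that discarding such controls does not lower player $i$'s optimal cost in $\mathcal{G}_o$ below $v^{*}$ — for instance by showing that a control achieving less than $v^{*}$ along a different trajectory would contradict a property of the $\mathcal{G}_c$-equilibrium, or by invoking a regularity/uniqueness hypothesis on the best-response trajectory; this is the delicate step, whereas the feasibility comparison and the use of Proposition \ref{equivalence_equi} are routine.
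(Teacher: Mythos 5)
Your proposal is incomplete, and you have flagged exactly where: everything rests on the ``value identification'' $\min_{\bar{\Omega}_i}J^i=\min_{\Omega_i}J^i$, which you describe as the delicate step but do not prove. Without it the inclusion $\bar{\mathcal{R}}_i(u^{-i*},x^{-i*})\subseteq\mathcal{R}_i(u^{-i*})$ does not follow --- a minimizer over the smaller set $\bar{\Omega}_i(u^{-i*},x^{-i*};x_1)$ need not minimize over the larger set $\Omega_i(u^{-i*};x_1)$, as you correctly observe. Worse, the identification you would need is false in general: since every point of $\bar{\Omega}_i$ at the equilibrium carries the state component $x^{i*}$ and $(u^{i*},x^{i*})$ attains the minimum there, asserting $v^*=\min_{\Omega_i}J^i$ is exactly the assertion that $(u^{i*},x^{i*})$ is a best response of player $i$ in the original game $\mathcal{G}_o$, i.e.\ that every equilibrium of $\mathcal{G}_c$ is an exact Nash equilibrium of $\mathcal{G}_o$. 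The entire point of Section~\ref{subsec:epsilon_NE} (Theorems~\ref{LQ_epsilon} and~\ref{Theorem_epsilon}) is that this fails and one can only guarantee an $\epsilon$-Nash equilibrium. So the stronger inclusion you set out to prove cannot hold without extra hypotheses, and no refinement of the ``delicate step'' will rescue the plan as written.

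You should also know that the paper's own proof is precisely the one-line inference you refuse to make: it states that the extra consistency constraint gives $\bar{\Omega}_i\subseteq\Omega_i$ and ``hence'' $\bar{\mathcal{R}}_i(u^{-i*},x^{-i*})\subseteq\mathcal{R}_i(u^{-i*})$, passing from an inclusion of feasible sets to an inclusion of argmin sets without justification. In that sense your attempt, though it does not close the argument, exposes a genuine weakness in the published proof rather than missing an idea contained in it. Since $\mathcal{D}_i$ is tied to $\bar{\mathcal{R}}_i$ only through $\mathcal{R}_i$, the proposition is essentially equivalent to the inclusion $\bar{\mathcal{R}}_i\subseteq\mathcal{R}_i$, so a correct proof would have to either add hypotheses forcing the constrained and unconstrained best-response values to coincide (e.g.\ that the equilibrium trajectory of $\mathcal{G}_c$ is already a best-response trajectory in $\mathcal{G}_o$), or replace the proposition by the weaker, directly verifiable statement that $\mathcal{D}_i$ can be \emph{chosen} large enough to contain $\bar{\mathcal{R}}_i$ as well, which is all that the subsequent application of Proposition~\ref{prop_MFCQ} actually requires.
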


   \begin{proof}
   	 Since there is one additional constraint of consistency of conjectured state in a player's problem in $\mathcal{G}_a$ than in $\mathcal{G}_o$, $\bar{\Omega}^{'}_i \subseteq \Omega_i$ and hence $\bar{\mathcal{R}}_i(u^{-i*},x^{-i*}) \subseteq {\mathcal{R}}_i(u^{-i*})$. Therefore, ${\bar{\mathcal{R}}}_i(u^{-i*},x^{-i*})$ is contained in $\mathring{\mathcal{D}}_i$.
    \end{proof}
    
     Let ($u^*,x^*$) be an equilibrium of $\mathcal{G}_a$. Given $(u^{-i*},x^{-i*}) \in U^{-i} \times X^{-i}$, consider a game with the following problem for  player $i$ denoted by $\tilde{{P}}_i$($u^{-i*},x^{-i*};x_1$).
    $$
    \problemsmall{$\tilde{{P}}_i$($u^{-i*},x^{-i*};x_1$)}
    {u^i,x^i}
    {J^i(u^i,x^i;u^{-i*},x_1)}
    {(u^i,x^i) \in \bar{\Omega}_i (u^{-i*},x^{-i*};x_1)\cap {\mathring{\mathcal{D}}}_i.}
    $$
Note that the feasible set in this game is an intersection of the interior of the compact set $\mathcal{D}_i$ and the feasible set of $\bar{{P}}_i$($u^{-i*},x^{-i*};x_1)$. Since $\mathring{\mathcal{D}}_i$ contains the best response of player $i$ in $\mathcal{G}_a$, the solution of problem $\bar{{P}}_i$($u^{-i*},x^{-i*};x_1)$ is also a solution of $\tilde{{P}}_i$($u^{-i*},x^{-i*};x_1)$. Therefore, an equilibrium of $\mathcal{G}_a$ is also an equilibrium of the game with player $i$'s problem given by $\tilde{{P}}_i(u^{-i*},x^{-i*};x_1)$.
Now consider the player $i$'s problem given by $\tilde{{P}}_i(u^{-i*},x^{-i*};x_1$) with an \textit{exact penalty function} for the consistent state conjecture constraint with penalty parameter $\mu_i$. Let us denote this problem by ${{Q}}_i (u^{-i*},x^{-i*};\mu_i$) which is given by the following.
$$
\problemsmall{}
{u^i,x^i}
{J^i(u^i,x^i;u^{-i*},x_1)+{\mu_i}|x^i-x^{j(i)*}|}
{(u^i,x^i) \in \Omega_i (u^{-i*};x_1) \cap {\mathring{\mathcal{D}_i},}}
$$
 where $\mu_i > 0$ is the penalty parameter and $x^{j(i)*} \in X^{j(i)}$ is the state conjecture of the next labelled player at the equilibrium of $\mathcal{G}_c$, where $j(i)$ given by \eqref{next_labelled_player}. We will refer to the game with player $i$'s problem given by ${{Q}}_i$($u^{-i*},x^{-i*};\mu_i$) as \textit{exact penalty function game}. The relation of equilibria of $\mathcal{G}_a$ and exact penalty function game can be deduced directly from a result in the paper \cite{di1989exact} which is given in Proposition \ref{prop_MFCQ}.
 For the result to hold, the Mangasarian-Fromowitz constraint qualification (MFCQ) is required to satisfy. For defining MFCQ, consider the equality constrained optimisation problem given by $\mathbb{P}$.
\begin{definition} {\label{def:MFCQ}}
    The MFCQ holds at $x\in \mathbb{R}^n$ for problem $\mathbb{P}$ if $\nabla h_j(x), j=1,\dots, p$ are linearly independent and there exist a $z \in \mathbb{R}^n$ such that  $\nabla h_j(x)'z=0, j=1,\dots, p.$
    \\By the theorem of alternatives, the MFCQ can be restated as follows.
    The MFCQ holds at $x\in \mathbb{R}^n$ if there exist no $v_j,j=1, \dots, p$ such that $
    \sum\limits_{j=1}^{p} v_j \nabla h_j(x) = 0, (v_j, j= 1, \dots, p) \neq 0.
    $
    \end{definition}
    Note that MFCQ is satisfied at the equilibrium of $\mathcal{G}_a$ since the gradients of the consistency constraints at equilibrium of $\mathcal{G}_a$ are linearly independent. In the following proposition we state the result which is derived from \cite{di1989exact}.

\begin{proposition}(\textit{Theorem 4} in \cite{di1989exact}) \label{prop_MFCQ}
Let $(u^*,x^*) \in \mathcal{F}^{'}$ be an equilibrium of $\mathcal{G}_a$. Assume Mangasarian-Fromowitz constraint qualification (MFCQ) is satisfied at ($u^*,x^*$) and suppose ${\mathcal{R}}_i (u^{-i*}) \subset \mathring{\mathcal{D}}_i$. Then there exist a $\mu_i^*, i \in \mathcal{N}$ such that for all $\mu_i \in [\mu_i^*,\infty)$, $(u^*,x^*)$ is an equilibrium of the {exact penalty function game}.
\end{proposition}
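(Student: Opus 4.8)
The plan is to obtain Proposition~\ref{prop_MFCQ} as a player-by-player application of the exact penalty theorem of Di Pillo and Grippo (\textit{Theorem 4} in \cite{di1989exact}). Fix an arbitrary player $i \in \mathcal{N}$ and recall from the discussion preceding the statement that, since $\mathcal{R}_i(u^{-i*}) \subset \mathring{\mathcal{D}}_i$ and (by the proposition just above) $\bar{\mathcal{R}}_i(u^{-i*},x^{-i*}) \subseteq \mathcal{R}_i(u^{-i*})$, the pair $(u^{i*},x^{i*})$ solves player $i$'s problem $\tilde{P}_i(u^{-i*},x^{-i*};x_1)$, i.e.\ it minimizes $J^i(\cdot\,;u^{-i*},x_1)$ over $\bar{\Omega}_i(u^{-i*},x^{-i*};x_1) \cap \mathring{\mathcal{D}}_i$. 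The whole argument then consists in showing that this constrained minimizer is, for $\mu_i$ large enough, a minimizer of the penalized problem ${Q}_i(u^{-i*},x^{-i*};\mu_i)$, and that carrying this out for every $i$ assembles into an equilibrium of the exact penalty function game.

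First I would recast $\tilde{P}_i(u^{-i*},x^{-i*};x_1)$ into the template of problem $\mathbb{P}$: the consistency constraints $x^i_{k+1} - x^{j(i)*}_{k+1} = 0$, $k \in \mathcal{K}$, play the role of the equality constraints $h_j(x)=0$, while the remaining restrictions --- membership $u^i_k \in U^i_k$, the state equations that define $\Omega_i(u^{-i*};x_1)$, and membership in $\mathring{\mathcal{D}}_i$ --- constitute the base feasible set over which the penalized objective is minimized; intersecting the base set with the compact $\mathcal{D}_i$ makes it compact, as the cited theorem requires. Under this identification the exact penalty objective $J^i(\cdot\,;u^{-i*},x_1) + \mu_i \sum_{j\in\mathcal{N}\backslash\{i\}}\sum_{k\in\mathcal{K}} |x^i_{k+1} - x^{j(i)*}_{k+1}|$ is precisely the objective of ${Q}_i(u^{-i*},x^{-i*};\mu_i)$, reading $|x^i - x^{j(i)*}|$ as the $\ell_1$ aggregate of the coordinate deviations.

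Next I would verify the hypotheses of the exact penalty theorem at $(u^{i*},x^{i*})$: the objective $J^i$ is continuously differentiable by assumption; the gradients of the consistency maps $(u^i,x^i) \mapsto x^i_{k+1} - x^{j(i)*}_{k+1}$ are constant coordinate selectors for distinct state coordinates, hence linearly independent, so MFCQ (Definition~\ref{def:MFCQ}) holds --- this is exactly the remark made just before the statement; and $(u^{i*},x^{i*}) \in \mathring{\mathcal{D}}_i$ since the best response to $(u^{-i*},x^{-i*})$ in $\mathcal{G}_a$ lies in $\bar{\mathcal{R}}_i(u^{-i*},x^{-i*}) \subseteq \mathcal{R}_i(u^{-i*}) \subset \mathring{\mathcal{D}}_i$, so the equilibrium point is interior to $\mathcal{D}_i$ and is not constrained by its boundary. \textit{Theorem 4} of \cite{di1989exact} then furnishes a threshold $\mu_i^*$ such that for every $\mu_i \in [\mu_i^*,\infty)$ the point $(u^{i*},x^{i*})$ minimizes $J^i(\cdot\,;u^{-i*},x_1) + \mu_i \sum_{j}\sum_k |x^i_{k+1} - x^{j(i)*}_{k+1}|$ over $\Omega_i(u^{-i*};x_1) \cap \mathring{\mathcal{D}}_i$, i.e.\ it solves ${Q}_i(u^{-i*},x^{-i*};\mu_i)$. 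Since the penalty term involves only player $i$'s own variables together with the fixed rival conjectures $x^{j(i)*}$, this holds for each $i \in \mathcal{N}$ simultaneously against the same profile $(u^{-i*},x^{-i*})$; by the equilibrium definition (in the spirit of Definition~\ref{def:alternate_cons_NE}) applied to the penalized subproblems, $(u^*,x^*)$ is an equilibrium of the exact penalty function game, which is the claim.

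The step I expect to be delicate is the passage from ``constrained minimizer'' to ``minimizer of the penalized problem'' at the required level of generality: \textit{Theorem 4} of \cite{di1989exact} is a statement about minimizers over a compact domain, so the auxiliary compact set $\mathcal{D}_i$, together with the hypothesis $\mathcal{R}_i(u^{-i*}) \subset \mathring{\mathcal{D}}_i$ (which guarantees that truncating to $\mathcal{D}_i$ does not cut off the genuine best responses), is exactly what makes the exactness constant $\mu_i^*$ available; getting this bookkeeping right --- and confirming that MFCQ of the player subproblem coincides with MFCQ of $\mathbb{P}$ under the identification above --- is the main thing to be careful about, while the rest is a direct invocation of the cited theorem.
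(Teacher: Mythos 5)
Your proposal is correct and follows exactly the route the paper takes: the paper offers no proof of its own for this proposition, presenting it as a direct player-by-player invocation of Theorem~4 of Di Pillo and Grippo (with the remark that penalizing only the consistency subset of the equality constraints is harmless), and your write-up is a faithful, more detailed unpacking of that invocation --- casting each $\tilde{P}_i$ into the template of $\mathbb{P}$, checking MFCQ via linear independence of the consistency gradients, using $\mathcal{D}_i$ for compactness, and assembling the per-player exactness thresholds into an equilibrium of the penalized game. The only blemish is notational: your penalty term sums over $j \in \mathcal{N}\backslash\{i\}$ a summand that does not depend on $j$ (in $\mathcal{G}_a$ only the next labelled player $j(i)$ appears), which merely rescales $\mu_i$ by $N-1$ and does not affect the argument.
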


     It can be seen that though in the paper by Di Pillo and Grippo, all the equality constraints are penalized, the result still holds even if we penalize a subset of the equality constraints as in our case. Let $(u^*,x^*)\in \mathcal{F}^{'}$ be an equilibrium of DTDG with consistent conjectures.     Suppose for $(u^{-i*},x^{-i*})$, the player $i$ plays the best response in the original game. In that case as we have done in the linear-quadratic case, let us denote $\mathbf{x}^i$ as the best response state trajectory conjectured by player $i$ in the original game. The relation of an equilibrium in $\mathcal{G}_c$ to the $\epsilon$-Nash equilibrium of $\mathcal{G}_o$ is stated as follows.
\begin{theorem}\label{Theorem_epsilon}
    Consider a DTDG with consistent conjectures given by  $\mathcal{G}_c$. Suppose $(u^*,x^*) \in \mathcal{F}^{'}$ is an equilibrium of $\mathcal{G}_c$. Suppose $x^{j(i)*} \in X^{j(i)}$ is the state trajectory conjectured by player $j(i) \in \mathcal{N}$ at the equilibrium of $\mathcal{G}_c$, where $j(i)$ is given by \eqref{next_labelled_player}. Suppose that ${\mathcal{R}}_i (u^{-i*}) \subset \mathring{\mathcal{D}}_i$, for all $i \in \mathcal{N}$. Let  $(\mathbf{u}^i,\mathbf{x}^i) \in {\mathcal{R}}_i(u^{-i*})$ be a best response state trajectory conjectured by player $i$ in $\mathcal{G}_o$ corresponding to $(u^{-i*},x^{-i*})$.
      Then $(u^{*},x^{*})$ is an $\epsilon$-Nash equilibrium of the original game with $\epsilon= \underset{i \in \mathcal{N}}{\max} \{\epsilon_i\}$, where $\epsilon_i$ satisfy ${\mu_i}|\mathbf{x}^i-x^{j(i)*}| \leq \epsilon_i$ and $\mu_i \in [\mu_i^*, \infty)$, where $\mu_i^*$ given by Proposition \ref{prop_MFCQ}.
    \end{theorem}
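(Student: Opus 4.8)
The plan is to chain together the three equivalences already established and then run the exact-penalty estimate in exactly the style of the linear-quadratic case (Theorem \ref{LQ_epsilon}). First I would use Proposition \ref{equivalence_equi} to pass from the equilibrium $(u^*,x^*)$ of $\mathcal{G}_c$ to an equilibrium of $\mathcal{G}_a$. Next I would intersect each player's feasible set with $\mathring{\mathcal{D}}_i$: this is harmless since $\mathcal{R}_i(u^{-i*})\subset\mathring{\mathcal{D}}_i$ forces $\bar{\mathcal{R}}_i(u^{-i*},x^{-i*})\subset\mathring{\mathcal{D}}_i$ (the proposition preceding this theorem), so no minimizer of $\bar{P}_i$ is discarded, and hence $(u^*,x^*)$ is an equilibrium of the game with problems $\tilde{P}_i$. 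Finally I would invoke Proposition \ref{prop_MFCQ} — MFCQ holds at the equilibrium of $\mathcal{G}_a$ because the gradients of the single-player consistency constraints are linearly independent — to conclude that for every $\mu_i\in[\mu_i^*,\infty)$ the point $(u^*,x^*)$ is an equilibrium of the exact penalty function game, i.e. for all $i\in\mathcal{N}$,
\begin{align*}
J^i(u^{i*},x^{i*};u^{-i*},x_1)+\mu_i|x^{i*}-x^{j(i)*}| \leq J^i(u^i,x^i;u^{-i*},x_1)+\mu_i|x^i-x^{j(i)*}|,
\end{align*}
for all $(u^i,x^i)\in\Omega_i(u^{-i*};x_1)\cap\mathring{\mathcal{D}}_i$.

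Then I would exploit the consistency of the equilibrium conjectures: since $(u^*,x^*)\in\mathcal{F}^{'}$ we have $x^{i*}_{k+1}=x^{j(i)*}_{k+1}$ for all $k\in\mathcal{K}$, so the penalty term on the left-hand side vanishes. Evaluating the resulting inequality at the best response $(\mathbf{u}^i,\mathbf{x}^i)\in\mathcal{R}_i(u^{-i*})\subset\mathring{\mathcal{D}}_i$ (which is feasible for $\tilde P_i$, hence for $Q_i$) gives
\begin{align*}
J^i(u^{i*},x^{i*};u^{-i*},x_1) \leq J^i(\mathbf{u}^i,\mathbf{x}^i;u^{-i*},x_1)+\mu_i|\mathbf{x}^i-x^{j(i)*}|.
\end{align*}
Since $(\mathbf{u}^i,\mathbf{x}^i)$ is a best response in $\mathcal{G}_o$, the first term on the right equals $\inf_{(\tilde{u}^i,\tilde{x}^i)\in\Omega_i(u^{-i*};x_1)}J^i(\tilde{u}^i,\tilde{x}^i;u^{-i*},x_1)$, and by the defining property of $\epsilon_i$ the penalty term is at most $\epsilon_i$; substituting and taking the maximum over $i\in\mathcal{N}$ produces exactly the defining inequality of an $\epsilon$-Nash equilibrium of $\mathcal{G}_o$ with $\epsilon=\max_{i\in\mathcal{N}}\epsilon_i$.

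I expect the substantive content to be carried entirely by the exact-penalty result, with the remaining work being bookkeeping: one must check that restricting to $\mathring{\mathcal{D}}_i$ preserves all best responses (so $\tilde{P}_i$ shares the optimal value of the unrestricted best-response problem), that MFCQ is genuinely available so Proposition \ref{prop_MFCQ} applies, and that penalizing only the consistency constraints — rather than all equality constraints as in \cite{di1989exact} — does not affect the conclusion (it does not, since the state-evolution equalities are kept as hard constraints and the argument of \cite{di1989exact} is unchanged under penalizing a subset). The conceptual point is that the exact penalty converts the hard consistency constraint of $\mathcal{G}_a$ into the additive term $\mu_i|\mathbf{x}^i-x^{j(i)*}|$, which quantifies precisely the gap between the consistent-conjecture equilibrium and a true best response; bounding that term by $\epsilon_i$ is what delivers the $\epsilon$-Nash estimate.
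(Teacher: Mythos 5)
Your proposal is correct and follows essentially the same route as the paper's proof: pass to $\mathcal{G}_a$ via Proposition \ref{equivalence_equi}, restrict to $\mathring{\mathcal{D}}_i$ (harmless since best responses are preserved), apply Proposition \ref{prop_MFCQ} to move to the exact penalty function game, evaluate the penalized inequality at the best response $(\mathbf{u}^i,\mathbf{x}^i)\in\mathcal{R}_i(u^{-i*})\subset\mathring{\mathcal{D}}_i$, and bound the penalty term by $\epsilon_i$. Your explicit observation that the left-hand penalty term vanishes by consistency of the equilibrium conjectures is a small clarification the paper leaves implicit, but the argument is the same.
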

    \begin{proof}
        Suppose \{$(u^{i*},x^{i*}), i \in \mathcal{N}\}\in \mathcal{F}^{'}$ is an open-loop Nash equilibrium of $\mathcal{G}_c$. Since the equilibrium set of $\mathcal{G}_c$ is equivalent to $\mathcal{G}_a$, \{$(u^{i*},x^{i*}) \in U^i\times X^i, i \in \mathcal{N}$\} is also an equilibrium of $\mathcal{G}_a$.  Since MFCQ is satisfied at an equilibrium of $\mathcal{G}_a$, by Proposition \ref{prop_MFCQ}, the equilibrium set of $\mathcal{G}_a$ is equivalent to that of the exact penalty function game for all $\mu_i \in [\mu_i^*, \infty)$. Hence, for all $i \in \mathcal{N}$ and for some $\mu_i \in [\mu_i^*, \infty)$,
        \begin{multline*}{J^i(u^{i*},x^{i*};u^{-i*},x_1)} \leq  \Bigl[  {J^i(u^i,x^i;u^{-i*},x_1)+}\\{{\mu_i}|x^i-x^{j(i)*}|}\Bigr], \forall {(u^i,x^i) \in \Omega_i (u^{-i*};x_1) \cap {\mathring{\mathcal{D}_i}}}.
        \end{multline*}
                
         Let $(\mathbf{u}^i,\mathbf{x}^i) \in \Omega_i (u^{-i*};x_1) $ be a best response of player $i$ in $\mathcal{G}_o$ corresponding to the equilibrium actions $u^{-i*}$ of $\mathcal{G}_c$. Since the best response set ${\mathcal{R}}_i(u^{-i*}) \subset \mathring{\mathcal{D}}_i$, we can write  $(\mathbf{u}^i,\mathbf{x}^i) \in \mathring{\mathcal{D}}_i$. Since $(\mathbf{u}^i,\mathbf{x}^i) \in \Omega_i(u^{-i*};x_1) \cap {\mathring{\mathcal{D}}_i}$, 
        $${J^i(u^{i*},x^{i*};u^{-i*},x_1)} \leq  {J^i(\mathbf{u}^i,\mathbf{x}^i;u^{-i*},x_1)+{\mu_i}|\mathbf{x}^i-x^{j(i)*}|}.$$
        Since $(\mathbf{u}^i,\mathbf{x}^i)$ is a best response of the original game,
         \begin{multline*}
        {J^i(u^{i*},x^{i*};u^{-i*},x_1)} \leq \\ \underset{(u^i,x^i) \in \Omega_i (u^{-i*};x_1)}{\inf} \Bigl[ {J^i({u}^i,{x}^i;u^{-i*},x_1) \Bigr]+}  {{\mu_i}|\mathbf{x}^i-x^{j(i)*}|}.
        \end{multline*}
        
        Since ${\mu_i}|\mathbf{x}^i-x^{j(i)*}| \leq \epsilon_i, \forall i \in \mathcal{N}$ and 
        $\epsilon = \max \{\epsilon_i; i \in \mathcal{N}\} $, 
        \begin{multline*}
            {J^i(u^{i*},x^{i*};u^{-i*},x_1)} \leq  \underset{(u^i,x^i)}{\inf} \Bigl[ {J^i({u}^i,{x}^i;u^{-i*},x_1) \Bigr]+\epsilon_i},
            \end{multline*}
            where the $\inf$ is over $(u^i,x^i)\in \Omega_i (u^{-i*};x_1)$.
            That is $(u^{i*},x^{i*}; i \in \mathcal{N})$ is an $\epsilon$-Nash equilibrium of the original game.
    \end{proof} 
    This result gives conditions for an equilibrium of the game with consistent conjectures to be an $\epsilon$-Nash equilibrium of the original game.    A significant factor which determines the $\epsilon$-Nash equilibrium relation is the deviation of state conjectures at the equilibrium of $\mathcal{G}_c$ to the best response state conjecture of a player in $\mathcal{G}_o$ corresponding to the state conjectures at the equilibrium of $\mathcal{G}_c$.

\section{Conclusion} \label{conclusion}
The paper provides new results regarding the existence of open-loop Nash equilibria in DTDGs. A new approach for analysis of DTDGs using the state conjecture formulation is the pivotal step which leads to the new results. Using this formulation, a solution of an optimisation problem is related to an equilibrium for a certain class of DTDGs. The first result that we provide is for a class of games called quasi-potential DTDGs. Many games including linear-quadratic DTDGs under certain conditions comes under the class of quasi-potential DTDGs. Further, we modify the DTDG with an additional constraint of consistency of  the conjectured states for which the game has a shared constraint structure. Although the game is different from the original game, an equilibrium of the original game is also an equilibrium of the DTDG with consistent conjectures. Utilising the shared constraint structure, conditions for the existence of equilibria for a class of games called potential DTDGs are provided in this modified game with consistent conjectures.  A detailed section on the $\epsilon$-Nash equilibrium gives a relation of an equilibrium of the DTDG with consistent conjectures to the original game.

\ifCLASSOPTIONcaptionsoff
  \newpage
\fi



%

\bibliographystyle{IEEEtran}
\bibliography{IEEEabrv,sigproc}

\end{document}